\title[Stokes equations in an axisymmetric domain]
{A framework for approximation of the Stokes equations in an axisymmetric domain}
\author[N.~Ericsson]{Niklas Ericsson}
\address{
  Department of Engineering Science,
  University West,
  SE--461 86 Trollh\"{a}ttan,
  Sweden and
  Department of Mathematical Sciences,
  Chalmers University of Technology and University
  of Gothenburg,
  SE--412 96 Gothenburg,
  Sweden}
\email{niklas.ericsson@hv.se}
\keywords{Stokes equations, axisymmetric domain, weighted Sobolev space, Fourier truncation.}
\subjclass{65T99, 76D07}
\thanks{The author wishes to express his sincere gratitude
to Mohammad Asadzadeh for continuous support throughout the work,
and to Stig Larsson for reading and commenting on the final draft.}
\renewcommand{\vec}[1]{\underline{#1}}
\newcommand{\gradvec}{\text{g\underline{ra}d} \,}
\newcommand{\gradtensor}{\text{g\underline{\underline{ra}}d} \,}
\newcommand{\gradtensorremark}{\emph{g\underline{\underline{ra}}d} \,}
\renewcommand{\div}{\text{div} \,}
\newcommand{\divremark}{\emph{div} \,}
\newtheorem{theorem}{Theorem}[section]
\newtheorem{corollary}[theorem]{Corollary}
\newtheorem{lemma}[theorem]{Lemma}
\newtheorem{remark}[theorem]{Remark}
\numberwithin{equation}{section}
\begin{document}

\begin{abstract}
We develop a framework for solving the stationary, incompressible Stokes equations in an axisymmetric domain.
By means of Fourier expansion with respect to the angular variable, the three-dimensional Stokes problem is reduced to an equivalent,
countable family of decoupled two-dimensional problems.
By using decomposition of three-dimensional Sobolev norms we derive natural variational spaces for the two-dimensional
problems, and show that the variational formulations are well-posed.
We analyze the error due to Fourier truncation and conclude that, for data that are sufficiently regular,
it suffices to solve a small number of two-dimensional problems.
\end{abstract}

\date{\today}

\maketitle                              


\section{Introduction}
To determine approximate solutions to fluid flow problems in three-dimensional geometries is a computationally demanding task.
In this paper we present a framework for efficiently solving the stationary, incompressible Stokes equations in an axisymmetric domain $\breve{\Omega}$,
which is obtained by rotating its half section $\Omega$ around the symmetry axis.

We use Fourier expansions with respect to the angular variable $\theta$, both of the solution and the data,
to reduce the three-dimensional Stokes problem to an equivalent, countable family of decoupled
two-dimensional problems (set in $\Omega$) for the Fourier coefficients.
A natural way to approximate the three-dimensional problem is then to use Fourier truncation and,
to obtain a fully discrete scheme, compute approximate solutions to a finite number of the two-dimensional problems.

This is an established technique to approximate boundary value problems that are invariant by rotation.
Early error analysis results
for second order elliptic problems can be found in~\cite{mercier_raugel_1982},
and for Poisson's equation in domains with reentrant edges in~\cite{MR1411853};
both using finite element approximation for the two-dimensional problems.
We refer to~\cite{2020arXiv200407216C} for additional references to problems described
by Laplace or wave equations, the Lam\'{e} system, Stokes or Navier-Stokes systems, and Maxwell's equations,
and to~\cite{MR1411853} for early references to algorithms and applications.

We analyze the error due to Fourier truncation and show that, for data that are sufficiently regular with respect to $\theta$,
it suffices to solve a small number of two-dimensional problems, which makes the method efficient. Also, the decoupling of the two-dimensional
problems makes it suitable for parallel implementation. A further advantage is simplification of the mesh-generation,
which is only required for the two-dimensional half section $\Omega$.

An added complexity is that the natural, variational spaces for the Fourier coefficients turn out to be weighted Sobolev spaces, where the
weight is either the distance to the symmetry axis, or its inverse. We derive these spaces by
decomposing (through a change of variables to cylindrical coordinates) the three-dimensional norms
for the relevant spaces
$L^2(\breve{\Omega})$,
$L^2_0(\breve{\Omega})$,
$\bigl(H^1(\breve{\Omega})\bigr)^3$,
$\bigl(H^1_0(\breve{\Omega})\bigr)^3$, and
$\bigl(H^{-1}(\breve{\Omega})\bigr)^3$,
into sums over all wavenumbers.
As a result, we show that the three-dimensional spaces are isometrically isomorphic to a certain subspace of the Cartesian product,
over all wavenumbers, of the two-dimensional weighted spaces.

The characterizations of the three-dimensional spaces thus obtained are in agreement with the results in~\cite{MR1693480},
where characterizations of $H^s(\breve{\Omega})$ and $\bigl(H^s(\breve{\Omega})\bigr)^3$ by Fourier coefficients
for any positive real $s$ are derived; here non-integer order spaces are treated first and the derivation for integer order spaces is then based
on Hilbert space interpolation.

As recently shown in~\cite{2020arXiv200407216C}, the more direct approach
for Sobolev spaces of integer order (based on changing to cylindrical coordinates in the three-dimensional norms)
results in equivalent norms (compared with the norms in~\cite{MR1693480}), but where the equivalence constants (unlike in~\cite{MR1693480})
are independent of the domain.
In~\cite{2020arXiv200407216C}, characterizations of $H^m(\breve{\Omega})$
by Fourier coefficients for any positive integer $m$ are facilitated
by the introduction of new differential operators
$\partial_{\zeta} = \frac{1}{\sqrt{2}} \, ( \partial_x - i \partial_y )$ and
$\partial_{\bar{\zeta}} = \frac{1}{\sqrt{2}} \, ( \partial_x + i \partial_y )$.
The results for vector spaces are then derived from a relation between scalar and vector norms for the Fourier coefficients.
In this paper, we work with the differential operators $\partial_x$ and $\partial_y$
and, in the vector case, derive the characterization by directly rewriting the $\bigl(H^1(\breve{\Omega})\bigr)^3$-norm.
We compare our results with~\cite{2020arXiv200407216C} in Appendix~\ref{app:sec:comparison}.
We also refer to the early results in~\cite{mercier_raugel_1982}, where this direct approach
was used to characterize the scalar spaces $H^m(\breve{\Omega})$, for $m=1,2,3$.

The purpose of the present work is to give a comprehensive presentation directly aimed at the Stokes problem
providing, inter alia, detailed derivations of the relevant two-dimensional spaces and norms.
Taking as starting-point, in fact, Fourier decompositions of the three-dimensional inner products
$( \cdot, \cdot )_{L^2(\breve{\Omega})}$ ,
$( \cdot, \cdot )_{(H^1(\breve{\Omega}))^3}$, and
$( \cdot, \cdot )_{(H^1_0(\breve{\Omega}))^3}$,
additionally enables us to derive a decomposition of the negative norm $\| \cdot \|_{(H^{-1}(\breve{\Omega}))^3}$,
and to highlight the relation between the three-dimensional weak formulation of the Stokes problem and the
two-dimensional weak formulations for the Fourier coefficients.

Examples of how to build on this framework by discretizing the two-dimensional problems
can be found in~\cite{MR1693480}, where spectral methods are used,
and~\cite{MR2210089}, where two families of finite elements of order 2 (one with continuous pressure corresponding to
the Taylor-Hood element and one with discontinuous pressure) are used. The case with an axisymmetric solution
(where only the Fourier coefficient of order 0 is considered, and the angular velocity component is equal to zero),
has been treated with finite elements in~\cite{MR2262757,MR1444389,MR854382}.

A paper in preparation will be devoted to design and analysis of stabilized finite elements for the two-dimensional problems.

An outline of the paper is as follows:
\begin{itemize}
\item In Sec.~\ref{sec:model_description}, we give some examples of axisymmetric domains and state the stationary, incompressible
Stokes equations.

\item In Sec.~\ref{sec:cylindrical_coordinates}, we recall some basic formulas, and state the Stokes problem in cylindrical coordinates.

\item In Sec.~\ref{sec:Fourier_expansion}, we use Fourier expansion with respect to the angular variable to reduce the three-dimensional Stokes problem
to a countable family of two-dimensional problems.

\item In Sec.~\ref{sec:variational_spaces}, we derive natural variational spaces for the Fourier coefficients by decomposing
the relevant three-dimensional norms into sums over all wavenumbers.

\item In Sec.~\ref{sec:two-dimensional_problems}, we state variational formulations of the two-dimensional problems
and show that these are well-posed.

\item In Sec.~\ref{sec:anisotropic_spaces}, we introduce two families of anisotropic spaces that we need to analyze the error due to Fourier truncation.

\item In Sec.~\ref{sec:Fourier_truncation_error}, we prove an error estimate due to Fourier truncation.
\end{itemize}

\section{Model description}
\label{sec:model_description}
We consider fluid flow in a bounded domain $\breve{\Omega}$ which is invariant by rotation around an axis.
We begin by discussing, and give a few examples of, such domains, following the notation in~\cite{MR1693480}.
We then state the stationary, incompressible Stokes equations, which we use to model the flow.

\subsection{Axisymmetric domains}
An example of an axisymmetric domain $\breve{\Omega}$ is given in Figure~\ref{fig:axisymmetricdomain1}.
The axisymmetric domain is obtained by rotating its half
section (meridian domain) $\Omega$ around the symmetry axis.
We assume that $\Omega$ is polygonal.
\begin{figure}[ht]
\begin{center}
\psset{xunit=0.95cm,yunit=0.95cm}
\begin{pspicture}(-1,-1.25)(11.5,1.5)
\psline{->}(0,0)(1.3,0)
\rput[c](1.25,-0.25){$z$}
\psline{->}(0,0)(0,1.3)
\rput[c](-0.25,1.25){$x$}
\psline{->}(0,0)(-0.7,-0.7)
\rput[c](-0.6,-1){$y$}
\psset{plotpoints=100}
\psparametricplot{90}{270}{t cos 0.5 mul 2 add t sin 1 mul 0 add}  
\pscustom[fillstyle=gradient,gradmidpoint=1,gradbegin=black!60!white,gradend=black!10!white]
{
\psparametricplot{-90}{90}{t cos 0.5 mul 5 add t sin 1 mul 0 add}  
\psline(5,1)(2,1)
\psparametricplot{90}{-90}{t cos 0.5 mul 2 add t sin 1 mul 0 add}  
\psline(2,-1)(5,-1)
}
\psline[linestyle=dashed]{->}(2,0)(1.8290,0.9397)   
\rput[l](2,0.5){$r$}
\rput[c](3.5,0){$\breve{\Omega}$}
\psline{->}(6.5,0)(7.8,0)
\rput[c](7.75,-0.25){$z$}
\psline{->}(6.5,0)(6.5,1.3)
\rput[c](6.25,1.25){$r$}
\psline(11.5,0)(11.5,1)(8.5,1)(8.5,0)
\psline[linewidth=0.25pt](8.5,0)(11.5,0)
\rput[c](10,0.5){$\Omega$}
\end{pspicture}
\caption{A right circular cylinder. The axisymmetric domain $\breve{\Omega}$ is obtained by rotating its polygonal half
section (meridian domain) $\Omega$ around the $z$-axis. \label{fig:axisymmetricdomain1}}
\end{center}
\end{figure}
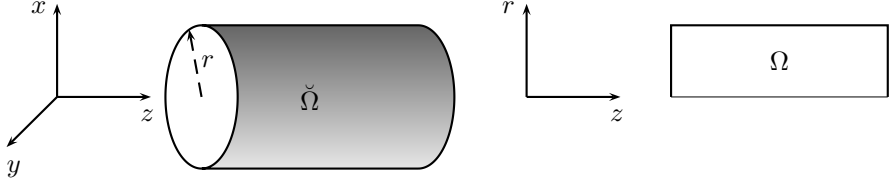
Two further examples of axisymmetric domains, and their half sections,
are given in Figure~\ref{fig:axisymmetricdomain2} and Figure~\ref{fig:axisymmetricdomain3}.
Note that the boundary $\partial \Omega = \Gamma \cup \Gamma_0$ of the half section $\Omega$ consists of two parts where
$\Gamma_0$, the interior of the part of $\partial \Omega$ contained in the symmetry axis,
is a kind of artificial boundary.
\begin{remark}
In~\cite{MR1693480}, $\Gamma_0$ is assumed to be the union of a finite number of segments with positive measure,
which means that $\Omega$ is not allowed to meet the symmetry axis at isolated points.
This assumption, as noted in~\cite{2020arXiv200407216C},
implies that $\breve{\Omega}$ (as well as its polygonal half section) is a Lipschitz domain,
and it guarantees existence of certain trace operators,
needed since~\cite{MR1693480} uses vanishing traces on $\Gamma_0$ in the
definition of the Fourier coefficient spaces.
The more direct approach for integer order Sobolev spaces used in this paper,
and more generally in~\cite{2020arXiv200407216C},
allows for more general axisymmetric domains whose intersection with the symmetry axis
is not necessarily a union of intervals and where the trace operators are not well defined.
\end{remark}
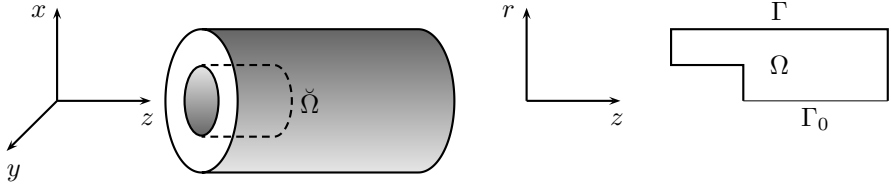
\begin{figure}[ht]
\begin{center}
\psset{xunit=0.95cm,yunit=0.95cm}
\begin{pspicture}(-1,-1.25)(11.5,1.5)
\psline{->}(0,0)(1.3,0)
\rput[c](1.25,-0.25){$z$}
\psline{->}(0,0)(0,1.3)
\rput[c](-0.25,1.25){$x$}
\psline{->}(0,0)(-0.7,-0.7)
\rput[c](-0.6,-1){$y$}
\psset{plotpoints=100}
\psparametricplot{90}{270}{t cos 0.5 mul 2 add t sin 1 mul 0 add}  
\pscustom[fillstyle=gradient,gradmidpoint=1,gradbegin=black!60!white,gradend=black!10!white]
{
\psparametricplot{-90}{90}{t cos 0.5 mul 5 add t sin 1 mul 0 add}  
\psline(5,1)(2,1)
\psparametricplot{90}{-90}{t cos 0.5 mul 2 add t sin 1 mul 0 add}  
\psline(2,-1)(5,-1)
}
\rput[c](3.5,0){$\breve{\Omega}$}
\psellipse[fillstyle=gradient,gradmidpoint=0,gradbegin=black!60!white,gradend=black!10!white](2,0)(0.25,0.5)
\pscustom[linestyle=dashed, dash=3pt 2pt]
{
\psline{-}(2,-0.5)(3,-0.5)
\psparametricplot{-90}{90}{t cos 0.25 mul 3 add t sin 0.5 mul 0 add}  
\psline{-}(3,0.5)(2,0.5)
}
\psline{->}(6.5,0)(7.8,0)
\rput[c](7.75,-0.25){$z$}
\psline{->}(6.5,0)(6.5,1.3)
\rput[c](6.25,1.25){$r$}
\psline(11.5,0)(11.5,1)(8.5,1)(8.5,0.5)(9.5,0.5)(9.5,0)
\psline[linewidth=0.25pt](9.5,0)(11.5,0)
\rput[c](10,0.5){$\Omega$}
\rput[c](10.5,-0.25){$\Gamma_0$}
\rput[c](10,1.25){$\Gamma$}
\end{pspicture}
\caption{A right circular cylinder with a hole. The boundary $\partial \Omega = \Gamma \cup \Gamma_0$
of the half section $\Omega$ consists of two parts.
$\Gamma_0$ is the interior of the part of $\partial \Omega$ contained in the $z$-axis.
Rotating the other part, $\Gamma$, around the $z$-axis gives back $\partial \breve{\Omega}$.
\label{fig:axisymmetricdomain2}}
\end{center}
\end{figure}
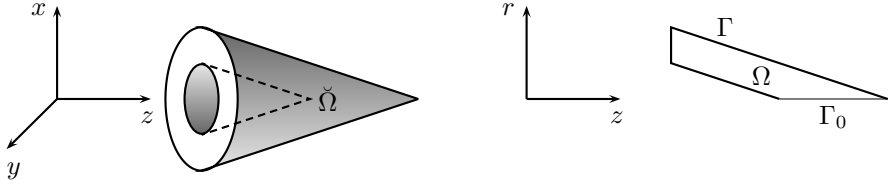
\begin{figure}[ht]
\begin{center}
\psset{xunit=0.95cm,yunit=0.95cm}
\begin{pspicture}(-1,-1.25)(11.5,1.5)
\psline{->}(0,0)(1.3,0)
\rput[c](1.25,-0.25){$z$}
\psline{->}(0,0)(0,1.3)
\rput[c](-0.25,1.25){$x$}
\psline{->}(0,0)(-0.7,-0.7)
\rput[c](-0.6,-1){$y$}
\psset{plotpoints=100}
\psparametricplot{90}{270}{t cos 0.5 mul 2 add t sin 1 mul 0 add}  
\pscustom[fillstyle=gradient,gradmidpoint=1,gradbegin=black!60!white,gradend=black!10!white]
{
\psline(5,0)(2,1)
\psparametricplot{90}{-90}{t cos 0.5 mul 2 add t sin 1 mul 0 add}  
\psline(2,-1)(5,0)
}
\rput[c](3.75,0){$\breve{\Omega}$}
\psellipse[fillstyle=gradient,gradmidpoint=0,gradbegin=black!60!white,gradend=black!10!white](2,0)(0.25,0.5)
\pscustom[linestyle=dashed, dash=3pt 2pt]
{
\psline{-}(2,-0.5)(3.5,0)(2,0.5)
}
\psline{->}(6.5,0)(7.8,0)
\rput[c](7.75,-0.25){$z$}
\psline{->}(6.5,0)(6.5,1.3)
\rput[c](6.25,1.25){$r$}
\psline(11.5,0)(8.5,1)(8.5,0.5)(10,0)
\psline[linewidth=0.25pt](10,0)(11.5,0)
\rput[c](9.75,0.325){$\Omega$}
\rput[c](10.75,-0.25){$\Gamma_0$}
\rput[c](9.25,1){$\Gamma$}
\end{pspicture}
\caption{A right circular cone with a hole. Each corner of $\Omega$ contained in the $z$-axis corresponds
to a conical singularity in $\partial \breve{\Omega}$, except if
the opening angle of $\Omega$ at this point is $\pi/2$.
Each remaining corner of $\Omega$ generates
an edge in $\partial \breve{\Omega}$.\label{fig:axisymmetricdomain3}}
\end{center}
\end{figure}

\subsection{Stokes problem}
We model fluid flow through an axisymmetric domain $\breve{\Omega}$ by
the stationary, incompressible Stokes equations
\begin{equation}
\label{eq:stokesproblem}
\left\{
  \begin{alignedat}{2}
    - \Delta \vec{\breve{u}}
      + \gradvec \breve{p} & = \vec{\breve{f}}, & \quad \; & \text{in } \breve{\Omega}, \\
      \div \vec{\breve{u}} & = 0,               &          & \text{in } \breve{\Omega}, \\
           \vec{\breve{u}} & = \vec{\breve{g}}, &          & \text{on } \partial \breve{\Omega},
  \end{alignedat}
\right.
\end{equation}
where the unknowns are the velocity $\vec{\breve{u}}$ and the pressure $\breve{p}$
\begin{align*}
\vec{\breve{u}} & = \breve{u}_x \vec{e}_x + \breve{u}_y \vec{e}_y + \breve{u}_z \vec{e}_z \in \bigl(H^1(\breve{\Omega})\bigr)^3, \\
\breve{p}       & \in L^2_0(\breve{\Omega}),
\end{align*}
and the data are the source term $\vec{\breve{f}}$ and the Dirichlet boundary data $\vec{\breve{g}}$
\begin{align*}
\vec{\breve{f}} & = \breve{f}_x \vec{e}_x + \breve{f}_y \vec{e}_y + \breve{f}_z \vec{e}_z \in \bigl(H^{-1}(\breve{\Omega})\bigr)^3, \\
\vec{\breve{g}} & = \breve{g}_x \vec{e}_x + \breve{g}_y \vec{e}_y + \breve{g}_z \vec{e}_z \in \bigl(H^{\frac{1}{2}}(\partial \breve{\Omega})\bigr)^3.
\end{align*}
For a vector field $\vec{\breve{v}} = \breve{v}_x \vec{e}_x + \breve{v}_y \vec{e}_y + \breve{v}_z \vec{e}_z$ defined on $\breve{\Omega}$,
we will let $\vec{\breve{v}}$ denote both the vector field itself and its Cartesian component vector $(\breve{v}_x, \breve{v}_y, \breve{v}_z)^T$.
Note that the divergence-free property $\div \vec{\breve{u}} = 0$ implies a necessary compatibility (zero flux) condition
on $\vec{\breve{g}}$:
\begin{equation}
\label{eq:zeroflux3d}
\int_{\partial \breve{\Omega}} \vec{\breve{g}} \cdot \vec{\breve{n}} \,\, \text{d}\breve{A} = 0,
\end{equation}
where $\vec{\breve{n}}$ denotes the unit outward normal vector to $\breve{\Omega}$ on $\partial \breve{\Omega}$,
and $\text{d}\breve{A}$ is (the magnitude of) the area element on $\partial \breve{\Omega}$.

We recall the standard definitions of the Lebesgue and Sobolev spaces
(with all derivatives being taken in the sense of distributions)
\begin{align*}
L^2(\breve{\Omega}) & := \left\{\breve{q} | \, \breve{q} \colon \breve{\Omega} \rightarrow \mathbb{C} \text{ measurable}, \;
  \int_{\breve{\Omega}} |\breve{q}|^2 \, \text{d}x \, \text{d}y \, \text{d}z < + \infty \right\}, \\
H^1(\breve{\Omega}) & := \left\{\breve{v} | \, \breve{v} \in L^{2}(\breve{\Omega}), \;
\partial_x \breve{v} \in L^{2}(\breve{\Omega}), \;
\partial_y \breve{v} \in L^{2}(\breve{\Omega}), \; \partial_z \breve{v} \in L^{2}(\breve{\Omega}) \right\},
\end{align*}
with norms
\begin{align*}
\| \breve{q} \|_{L^2(\breve{\Omega})} & := \Bigl( \int_{\breve{\Omega}} |\breve{q}|^2 \, \text{d}x \, \text{d}y \, \text{d}z \Bigr)^{\frac{1}{2}}, \\
\| \breve{v} \|_{H^1(\breve{\Omega})} & := \left( \| \breve{v} \|_{L^2(\breve{\Omega})}^2 +
                                          \| \partial_x \breve{v} \|_{L^2(\breve{\Omega})}^2 +
                                          \| \partial_y \breve{v} \|_{L^2(\breve{\Omega})}^2 +
                                          \| \partial_z \breve{v} \|_{L^2(\breve{\Omega})}^2 \right)^{\frac{1}{2}},
\end{align*}
and the corresponding inner products $( \cdot, \cdot )_{L^2(\breve{\Omega})}$
and $( \cdot, \cdot )_{H^1(\breve{\Omega})}$.
\begin{remark}
We consider spaces of complex-valued functions. In Section~\ref{sec:Fourier_expansion} we will use Fourier expansions
of the data and the unknowns, to reduce the three-dimensional Stokes
problem in $\breve{\Omega}$ to a countable family of two-dimensional problems in the half section $\Omega$.
Since the Fourier coefficients (also for real-valued functions) are complex-valued, the
two-dimensional variational spaces, that we will define in Section~\ref{sec:variational_spaces},
will be spaces of complex-valued functions.
Since the interplay between the three- and two-dimensional spaces, which will play an important
part in what follows, is developed most naturally when all spaces contain complex-valued functions,
we will use that also for the three-dimensional spaces, even though in practice the three-dimensional
data and solution will be real-valued.
\end{remark}
We also recall the subspaces
\begin{align*}
L^2_0(\breve{\Omega}) & := \left\{\breve{q} | \, \breve{q} \in L^{2}(\breve{\Omega}), \;
  \int_{\breve{\Omega}} \breve{q} \, \text{d}x \, \text{d}y \, \text{d}z = 0 \right\}, \\
H^1_0(\breve{\Omega}) & := \left\{\breve{v} | \, \breve{v} \in H^1(\breve{\Omega}), \;
\breve{v} = 0 \text{ on } \partial \breve{\Omega} \right\},
\end{align*}
and the dual of $H^1_0(\breve{\Omega})$:
\begin{align*}
H^{-1}(\breve{\Omega}) := \bigl( H^1_0(\breve{\Omega}) \bigr)^*.
\end{align*}
\begin{remark}
By the dual space $H^*$ we mean the linear space of continuous
anti-linear functionals on $H$, so that, in particular,
$(u,v)_H=\langle u,v\rangle_{H^* \times H}$ when $u,v\in H$.
\end{remark}
On $H^1_0(\breve{\Omega})$ we will use the semi-norm
\begin{align*}
| \breve{v} |_{H^1(\breve{\Omega})} & := \left( \| \partial_x \breve{v} \|_{L^2(\breve{\Omega})}^2 +
                                          \| \partial_y \breve{v} \|_{L^2(\breve{\Omega})}^2 +
                                          \| \partial_z \breve{v} \|_{L^2(\breve{\Omega})}^2 \right)^{\frac{1}{2}},
\end{align*}
which is a norm, equivalent to $\| \cdot \|_{H^1(\breve{\Omega})}$, on $H^1_0(\breve{\Omega})$,
and the corresponding inner product $( \cdot, \cdot )_{H^1_0(\breve{\Omega})}$.

Denoting by $\gamma_0$ the linear and continuous trace operator defined on $H^1(\breve{\Omega})$, we have
\begin{align*}
H^{\frac{1}{2}}(\partial \breve{\Omega}) := \gamma_0 \bigl( H^1(\breve{\Omega}) \bigr),
\end{align*}
with norm
\begin{align*}
\| \breve{g} \|_{H^{\frac{1}{2}}(\partial \breve{\Omega})} :=
\underset{\begin{array}{c} \scriptstyle \breve{v} \in H^1(\breve{\Omega}) \vspace{-1.25mm} \\
\scriptstyle \gamma_0 \breve{v} = \breve{g} \end{array}}{\inf} \| \breve{v} \|_{H^1(\breve{\Omega})}.
\end{align*}

\section{Cylindrical coordinates}
\label{sec:cylindrical_coordinates}
Since the domain $\breve{\Omega}$ is axisymmetric,
we change from Cartesian coordinates $\left(x, y, z\right)$ in
$\mathbb{R} \times \mathbb{R} \times \mathbb{R}$
to cylindrical coordinates $\left(r, \theta, z\right)$ in
$\mathbb{R}_{+} \times \left(-\pi, \pi \right] \times \mathbb{R}$, where
\begin{equation*}
\left\{
  \begin{alignedat}{1}
     r & = \sqrt{x^2+y^2}, \\
     \theta & = \left\{
     \begin{array}{rcl}
    \!\!\! -\arccos \frac{x}{r} & \text{if} & y<0, \\
            \arccos \frac{x}{r} & \text{if} & y\geq 0.
     \end{array}
     \right.
  \end{alignedat}
\right.
\end{equation*}
We define $\tilde{\Omega}$ as the product of the half section $\Omega$ and $\left(-\pi, \pi \right]$
\begin{equation}
\label{eq:omegatilde}
\tilde{\Omega} := \left\{ \left(r, \theta, z\right) | \,
\left(r, z\right) \in  \Omega, \; -\pi < \theta \leq \pi \right\},
\end{equation}
and, by analogy,
\begin{equation}
\label{eq:gammatilde}
\tilde{\Gamma} := \left\{ \left(r, \theta, z\right) | \,
\left(r, z\right) \in  \Gamma, \; -\pi < \theta \leq \pi \right\},
\end{equation}
where $\tilde{\Omega}$ and $\tilde{\Gamma}$ are point sets in cylindrical coordinates
corresponding to the domain $\breve{\Omega}$ and its boundary $\partial \breve{\Omega}$, respectively.

\subsection{Basic formulas}
We recall the identities (see Figure~\ref{fig:cylcoord} for an illustration in the $xy$-plane)
\begin{figure}[ht]
\begin{center}
\psset{xunit=0.5cm,yunit=0.5cm}
\begin{pspicture}(-0.5,-0.5)(7.5,7.5)
\psline[linewidth=0.25pt]{->}(0,0)(7,0)
\rput[c](7,-0.375){$x$}
\psline[linewidth=0.25pt]{->}(0,0)(0,7)
\rput[c](-0.375,7){$y$}
\psline{->}(0,0)(1,0)
\rput[c](1,-0.5){$\vec{e}_x$}
\psline{->}(0,0)(0,1)
\rput[c](-0.5,1){$\vec{e}_y$}
\psline[linestyle=dashed,linewidth=0.25pt](0,0)(3,1.091910703)
\rput[c](1.5,0.8){$r$}
\psarc[linewidth=0.25pt]{->}(0,0){1}{0}{20}
\rput[c](2.3,0.44){$\theta$}
\psline{->}(3,1.091910703)(3.939692621,1.433930846)
\rput[l](4.1,1.433930846){$\vec{e}_r$}
\psline{->}(3,1.091910703)(2.657979857,2.031603324)
\rput[r](2.55,2.031603324){$\vec{e}_{\theta}$}
\psline{->}(3,1.091910703)(3.694592711,5.031141715)
\rput[l](3.9,5.031141715){$\vec{\breve{v}} = \breve{v}_x \vec{e}_x + \breve{v}_y \vec{e}_y =
v_r \vec{e}_r + v_{\theta} \vec{e}_{\theta}$}
\pscircle[fillstyle=solid,fillcolor=black](3,1.091910703){0.05}
\rput[c](3,0.6){$P$}
\end{pspicture}
\caption{A point $P$ with Cartesian coordinates $\left(x, y\right)$ and
cylindrical coordinates $\left(r, \theta\right)$,
where $x= r \cos \theta$ and $y = r \sin \theta$.
A vector $\vec{\breve{v}}$, with tail in $P$, with Cartesian components
$\left(\breve{v}_x, \breve{v}_y\right)^T$
and cylindrical components $\left(v_r, v_{\theta}\right)^T$.\label{fig:cylcoord}}
\end{center}
\end{figure}
\begin{equation}
\label{eq:dxdydrdtheta}
\partial_{x} = \cos \theta \, \partial_{r} - \frac{1}{r} \, \sin \theta \, \partial_{\theta}, \quad
\partial_{y} = \sin \theta \, \partial_{r} + \frac{1}{r} \, \cos \theta \, \partial_{\theta},
\end{equation}
\begin{equation}
\label{eq:exeyeretheta}
\vec{e}_x = \cos \theta \, \vec{e}_r - \sin \theta \, \vec{e}_{\theta}, \quad
\vec{e}_y = \sin \theta \, \vec{e}_r + \cos \theta \, \vec{e}_{\theta},
\end{equation}
relating the partial derivatives and the orthonormal basis vectors
of the Cartesian and cylindrical coordinate systems, and
\begin{equation}
\label{eq:componentscyl}
\breve{v}_x = \cos \theta \, v_r - \sin \theta \, v_{\theta}, \quad
\breve{v}_y = \sin \theta \, v_r + \cos \theta \, v_{\theta}, \quad
\breve{v}_z = v_z,
\end{equation}
relating the component vectors $\vec{\breve{v}} = \left(\breve{v}_x, \breve{v}_y, \breve{v}_z\right)^T$
and $\vec{v} = \left(v_r, v_{\theta}, v_z\right)^T$
of a vector field
$\vec{\breve{v}} = \breve{v}_x \vec{e}_x + \breve{v}_y \vec{e}_y + \breve{v}_z \vec{e}_z =
v_r \vec{e}_r + v_{\theta} \vec{e}_{\theta} + v_z \vec{e}_z$ expressed in the two coordinate systems.
We will write~\eqref{eq:componentscyl} in matrix form $\vec{\breve{v}} = \mathcal{R}_{\theta} \vec{v}$, where
\begin{align*}
\mathcal{R}_{\theta} = \left(
\begin{array}{ccc}
\cos \theta & -\sin \theta & 0 \\
\sin \theta & \cos \theta & 0 \\
0 & 0 & 1
\end{array}
\right).
\end{align*}
From these identities follow the formulas for the gradient and the Laplacian operator acting on a scalar function
$\breve{v}(x, y, z) = v(r, \theta, z)$
\begin{equation}
\label{eq:gradvcyl}
\gradvec \breve{v} =
\partial_{x} \breve{v} \, \vec{e}_x + \partial_{y} \breve{v} \, \vec{e}_y + \partial_{z} \breve{v} \, \vec{e}_z =
\partial_{r} v \, \vec{e}_r + \frac{1}{r} \, \partial_{\theta} v \, \vec{e}_{\theta} + \partial_{z} v \, \vec{e}_z,
\end{equation}
\begin{equation}
\label{eq:laplacevcyl}
\begin{alignedat}{1}
\hspace{-12mm} \Delta \breve{v} =
\partial^2_x \breve{v} + \partial^2_y \breve{v} + \partial^2_z \breve{v}
& = \partial^2_r v + \frac{1}{r} \, \partial_r v + \frac{1}{r^2} \, \partial^2_{\theta} v + \partial^2_z v \\
& = \frac{1}{r} \, \partial_r ( r \partial_r v ) + \frac{1}{r^2} \, \partial^2_{\theta} v + \partial^2_z v,
\end{alignedat}
\end{equation}
and for the divergence and the vector Laplacian operator acting on a vector function
$\vec{\breve{v}}(x, y, z) = \mathcal{R}_{\theta} \vec{v}(r, \theta, z)$
\begin{equation}
\label{eq:divvcyl}
\begin{alignedat}{1}
\div \vec{\breve{v}} = \partial_x \breve{v}_x + \partial_y \breve{v}_y + \partial_z \breve{v}_z
& = \partial_r v_r + \frac{1}{r} \, v_r + \frac{1}{r} \, \partial_{\theta} v_{\theta} + \partial_z v_z \\
& = \frac{1}{r} \, \partial_r (r v_r) + \frac{1}{r} \, \partial_{\theta} v_{\theta} + \partial_z v_z,
\end{alignedat}
\end{equation}
\begin{equation}
\label{eq:veclaplacevcyl}
\begin{alignedat}{1}
\Delta \vec{\breve{v}} & = \left( \partial^2_x \breve{v}_x + \partial^2_y \breve{v}_x + \partial^2_z \breve{v}_x \right) \vec{e}_x \\
& \quad + \left( \partial^2_x \breve{v}_y + \partial^2_y \breve{v}_y + \partial^2_z \breve{v}_y \right) \vec{e}_y \\
& \qquad + \left( \partial^2_x \breve{v}_z + \partial^2_y \breve{v}_z + \partial^2_z \breve{v}_z \right) \vec{e}_z \\
& = \bigl( \frac{1}{r} \, \partial_r ( r \partial_r v_r ) + \frac{1}{r^2} \, \partial^2_{\theta} v_r + \partial^2_z v_r
    - \frac{1}{r^2} \, v_r - \frac{2}{r^2} \, \partial_{\theta} v_{\theta} \bigr) \, \vec{e}_r \\
& \quad + \bigl( \frac{1}{r} \, \partial_r ( r \partial_r v_{\theta} ) + \frac{1}{r^2} \, \partial^2_{\theta} v_{\theta}
    + \partial^2_z v_{\theta} - \frac{1}{r^2} \, v_{\theta} + \frac{2}{r^2} \, \partial_{\theta} v_r \bigr) \, \vec{e}_{\theta} \\
& \qquad + \bigl( \frac{1}{r} \, \partial_r ( r \partial_r v_z ) + \frac{1}{r^2} \, \partial^2_{\theta} v_z + \partial^2_z v_z \bigr) \, \vec{e}_z,
\end{alignedat}
\end{equation}
where the last two terms in the radial and angular components of $\Delta \vec{\breve{v}}$
result from the $\theta$-dependence of $\vec{e}_r = \cos \theta \, \vec{e}_x + \sin \theta \, \vec{e}_y$
and $\vec{e}_{\theta} = -\sin \theta \, \vec{e}_x + \cos \theta \, \vec{e}_y$,
by the relations $\partial_{\theta} \, \vec{e}_r = \vec{e}_{\theta}$ and $\partial_{\theta} \, \vec{e}_{\theta} = -\vec{e}_r$.

\subsection{Stokes problem in cylindrical coordinates}
Expressing both the data and the unknowns in cylindrical coordinates
\begin{align*}
\vec{\breve{u}} & = \breve{u}_x \vec{e}_x + \breve{u}_y \vec{e}_y + \breve{u}_z \vec{e}_z =
                    u_r \vec{e}_r + u_{\theta} \vec{e}_{\theta} + u_z \vec{e}_z, \\
\breve{p}       & = p, \\
\vec{\breve{f}} & = \breve{f}_x \vec{e}_x + \breve{f}_y \vec{e}_y + \breve{f}_z \vec{e}_z =
                    f_r \vec{e}_r + f_{\theta} \vec{e}_{\theta} + f_z \vec{e}_z, \\
\vec{\breve{g}} & = \breve{g}_x \vec{e}_x + \breve{g}_y \vec{e}_y + \breve{g}_z \vec{e}_z =
                    g_r \vec{e}_r + g_{\theta} \vec{e}_{\theta} + g_z \vec{e}_z,
\end{align*}
where $\vec{u} = \left(u_r, u_{\theta}, u_z \right)^T$, $p$, and
$\vec{f} = \left(f_r, f_{\theta}, f_z \right)^T$ are functions (distributions) on $\tilde{\Omega}$
and $\vec{g} = \left(g_r, g_{\theta}, g_z \right)^T$ on $\tilde{\Gamma}$,
from~\eqref{eq:gradvcyl}--\eqref{eq:veclaplacevcyl}
we can write the Stokes problem~\eqref{eq:stokesproblem} in cylindrical coordinates
\begin{equation}
\label{eq:stokesproblemcylcoord}
\left\{
  \begin{alignedat}{2}
    - \Delta u_r + \frac{1}{r^2} \, u_r + \frac{2}{r^2} \, \partial_{\theta} u_{\theta} \hspace{3mm}
    + \partial_{r} p & = f_r, & \quad \; & \text{in } \tilde{\Omega}, \\
    - \Delta u_{\theta} + \frac{1}{r^2} \, u_{\theta} - \frac{2}{r^2} \, \partial_{\theta} u_r
    + \frac{1}{r} \, \partial_{\theta} p & = f_{\theta}, & \quad \; & \text{in } \tilde{\Omega}, \\
    - \Delta u_z \hspace{31.5mm}
    + \partial_{z} p & = f_z, & \quad \; & \text{in } \tilde{\Omega}, \\
    \frac{1}{r} \, \partial_r (r u_r) + \frac{1}{r} \, \partial_{\theta} u_{\theta} + \partial_z u_z & = 0, & & \text{in } \tilde{\Omega}, \\
    \vec{u} & = \vec{g}, &  & \text{on } \tilde{\Gamma},
  \end{alignedat}
\right.
\end{equation}
where, from~\eqref{eq:laplacevcyl},
\begin{align*}
\Delta v = \frac{1}{r} \, \partial_r ( r \partial_r v ) + \frac{1}{r^2} \, \partial^2_{\theta} v + \partial^2_z v.
\end{align*}

\section{Fourier expansion}
\label{sec:Fourier_expansion}
A natural way to reduce the three-dimensional Stokes problem~\eqref{eq:stokesproblemcylcoord} in $\tilde{\Omega}$ to a countable family of
two-dimensional problems in $\Omega$,
is to use Fourier expansion with respect to the angular variable $\theta$, both of the solution
\begin{align}
\label{eq:fourieru}
\vec{u}(r, \theta, z) & = \frac{1}{\sqrt{2 \pi}} \, \sum_{k \in \mathbb{Z}} \vec{u}^k(r, z) e^{ik \theta}, \\
\label{eq:fourierp}
p(r, \theta, z) & = \frac{1}{\sqrt{2 \pi}} \, \sum_{k \in \mathbb{Z}} p^k(r, z) e^{ik \theta},
\end{align}
where $\vec{u}^k = \left(u_r^k, u_{\theta}^k, u_z^k \right)^T$,
and of the data
\begin{align}
\label{eq:fourierf}
\vec{f}(r, \theta, z) & = \frac{1}{\sqrt{2 \pi}} \, \sum_{k \in \mathbb{Z}} \vec{f}^k(r, z) e^{ik \theta}, \\
\label{eq:fourierg}
\vec{g}(r, \theta, z) & = \frac{1}{\sqrt{2 \pi}} \, \sum_{k \in \mathbb{Z}} \vec{g}^k(r, z) e^{ik \theta}.
\end{align}

\subsection{Two-dimensional problems}
Inserting the Fourier expansions~\eqref{eq:fourieru}--\eqref{eq:fourierg} into~\eqref{eq:stokesproblemcylcoord} results,
since the Stokes problem is linear and invariant by rotation
(which means that the coefficients of the Stokes operator in cylindrical coordinates do not depend on $\theta$),
in uncoupled two-dimensional problems for each Fourier coefficient pair $\left(\vec{u}^k, p^k \right)$,
$k \in \mathbb{Z}$:
\begin{equation}
\label{eq:stokes2dproblems}
\left\{
  \begin{alignedat}{2}
    -\Delta_a u_r^k
    + \frac{1+k^2}{r^2} \, u_r^k + \frac{2ik}{r^2} \, u_{\theta}^k \hspace{1mm}
    + \partial_{r} p^k & = f_r^k, & \quad \; & \text{in } \Omega, \\
    -\Delta_a u_{\theta}^k
    + \frac{1+k^2}{r^2} \, u_{\theta}^k - \frac{2ik}{r^2} \, u_r^k
    + \frac{ik}{r} \, p^k & = f_{\theta}^k, & \quad \; & \text{in } \Omega, \\
    -\Delta_a u_z^k
    + \frac{k^2}{r^2} \, u_z^k \hspace{21.5mm}
    + \partial_{z} p^k & = f_z^k, & \quad \; & \text{in } \Omega, \\
    \text{div}_k \, \vec{u}^k & = 0, & & \text{in } \Omega, \\
    \vec{u}^k & = \vec{g}^k, &  & \text{on } \Gamma,
  \end{alignedat}
\right.
\end{equation}
where $\Delta_a$ denotes the axisymmetric part of $\Delta$:
\begin{align*}
\Delta_a v := \frac{1}{r} \, \partial_r ( r \partial_r v ) + \partial^2_z v,
\end{align*}
and
\begin{equation}
\label{eq:divk}
\text{div}_k \, \vec{u}^k :=  \frac{1}{r} \, \partial_r (r u_r^k) + \frac{ik}{r} \, u_{\theta}^k + \partial_z u_z^k.
\end{equation}
\begin{remark}
We will, for all $k \in \mathbb{Z}$, show existence and uniqueness of solutions
to~\eqref{eq:stokes2dproblems} in Section~\ref{sec:existence_and_uniqueness}.
\end{remark}
\begin{remark}
By taking the complex conjugate of~\eqref{eq:stokes2dproblems},
it is easy to see that for real-valued data $\vec{f}$ and $\vec{g}$, in which case
(letting $\vec{\bar{f}}^{k}$, with some ambiguity of notation, denote the complex conjugate of $\vec{f}^{k}$)
\begin{align*}
\vec{f}^{-k} & = \vec{\bar{f}}^{k}, \\
\vec{g}^{-k} & = \vec{\bar{g}}^{k},
\end{align*}
the pair $\left(\vec{\bar{u}}^k, \, \bar{p}^k \right)$ solves~\eqref{eq:stokes2dproblems} with $k$ replaced by $-k$.
This means that the Fourier coefficients of the solution will also satisfy
\begin{align*}
\vec{u}^{-k} & = \vec{\bar{u}}^{k}, \\
p^{-k} & = \bar{p}^{k},
\end{align*}
(corresponding, of course, to a unique, real-valued solution of the three-dimensional Stokes problem for real-valued data)
so in the practical case with real-valued data we only need to solve the problems~\eqref{eq:stokes2dproblems} for $k \ge 0$
and, in addition, the solution for $k=0$ will be real-valued.
\end{remark}
\begin{remark}
The compatibility condition~\eqref{eq:zeroflux3d} translates into a condition
on the Fourier coefficient $\vec{g}^0$:
\begin{equation}
\label{eq:zeroflux2d}
\int_{\Gamma} \left( g_r^0 n_r + g_z^0 n_z \right) r \, \emph{d}s = 0,
\end{equation}
where $\vec{n} = \left(n_r, n_z \right)^T$ denotes the unit outward normal vector to $\Omega$ on $\Gamma$,
and $\emph{d}s$ is (the length of) the line element along $\Gamma$.
\end{remark}

\section{Variational spaces}
\label{sec:variational_spaces}
In Section~\ref{subsec:variational_formulation}, we will state variational formulations of the two-dimensional
problems~\eqref{eq:stokes2dproblems}.
To determine natural variational spaces for the Fourier coefficients defined on the half section $\Omega$,
we start by expressing the $L^2(\breve{\Omega})$- and $\bigl(H^1(\breve{\Omega})\bigr)^3$-inner products
in cylindrical coordinates, as integrals over $\tilde{\Omega}$,
and use Fourier expansions to derive decompositions of the inner products, and associated norms,
into sums over all wavenumbers.

Based on the structure of the different terms, which are weighted integrals over $\Omega$
of the Fourier coefficients and their derivatives, we define weighted Sobolev spaces on $\Omega$.

As a result, we obtain characterizations of the three-dimensional spaces
$L^2(\breve{\Omega})$ and $\bigl(H^1(\breve{\Omega})\bigr)^3$,
in terms of two-dimensional weighted spaces on $\Omega$ for all Fourier coefficients.
In particular, we show that the three-dimensional spaces are isometrically isomorphic
to a subspace of the Cartesian product, over all wavenumbers, of the two-dimensional weighted spaces.
As a corollary, we obtain corresponding characterizations of the subspaces
$L^2_0(\breve{\Omega})$ and $\bigl(H^1_0(\breve{\Omega})\bigr)^3$.

Using the results for $\bigl(H^1_0(\breve{\Omega})\bigr)^3$, we finally derive a characterization
(also in terms of spaces for all Fourier coefficients)
of its dual $\bigl(H^{-1}(\breve{\Omega})\bigr)^3$.

\subsection{Fourier decomposition of inner products and norms}
\label{subsec:Fourier_decomposition_of_norms}
In cylindrical coordinates, the $L^2(\breve{\Omega})$-inner product of two scalar functions
\begin{align}
\label{eq:q_cartesian_cyl}
\breve{p}(x, y, z) = p(r, \theta, z), \qquad
\breve{q}(x, y, z) = q(r, \theta, z),
\end{align}
is expressed as an integral over $\tilde{\Omega}$, defined by~\eqref{eq:omegatilde}, with weight $r$:
\begin{equation}
\label{eq:l2normcylcoord}
\begin{alignedat}{1}
( \breve{p}, \breve{q} )_{L^2(\breve{\Omega})}
& = \int_{\breve{\Omega}} \breve{p} \hspace{0.5pt} \bar{\breve{q}} \, \text{d}x \, \text{d}y \, \text{d}z \\
& = \int_{\tilde{\Omega}} p \hspace{0.5pt} \bar{q} \, r \, \text{d}r \, \text{d}\theta \, \text{d}z.
\end{alignedat}
\end{equation}
For two vector functions
\begin{align}
\label{eq:vec_v_cartesian_cyl}
\vec{\breve{u}}(x, y, z) = \mathcal{R}_{\theta} \vec{u}(r, \theta, z), \qquad
\vec{\breve{v}}(x, y, z) = \mathcal{R}_{\theta} \vec{v}(r, \theta, z),
\end{align}
the $\bigl(H^1(\breve{\Omega})\bigr)^3$-inner product
\begin{equation*}
\begin{alignedat}{1}
( \vec{\breve{u}}, \vec{\breve{v}} )_{(H^1(\breve{\Omega}))^3}
& = \int_{\breve{\Omega}}
  \left(
    \vec{\breve{u}} \cdot \vec{\bar{\breve{v}}} +
    \gradtensor \vec{\breve{u}} : \gradtensor \vec{\bar{\breve{v}}}
  \right) \, \text{d}x \, \text{d}y \, \text{d}z \\
& = \int_{\breve{\Omega}}
  \left(
    \breve{u}_x \bar{\breve{v}}_x + \breve{u}_y \bar{\breve{v}}_y + \breve{u}_z \bar{\breve{v}}_z \right. \\
    & \qquad \quad +        \partial_x \breve{u}_x \partial_x \bar{\breve{v}}_x
                          + \partial_y \breve{u}_x \partial_y \bar{\breve{v}}_x
                          + \partial_z \breve{u}_x \partial_z \bar{\breve{v}}_x \\
    & \qquad \quad +        \partial_x \breve{u}_y \partial_x \bar{\breve{v}}_y
                          + \partial_y \breve{u}_y \partial_y \bar{\breve{v}}_y
                          + \partial_z \breve{u}_y \partial_z \bar{\breve{v}}_y \\
    & \qquad \quad + \left. \partial_x \breve{u}_z \partial_x \bar{\breve{v}}_z
                          + \partial_y \breve{u}_z \partial_y \bar{\breve{v}}_z
                          + \partial_z \breve{u}_z \partial_z \bar{\breve{v}}_z
  \right) \, \text{d}x \, \text{d}y \, \text{d}z
\end{alignedat}
\end{equation*}
can, through
repeated use of relations~\eqref{eq:dxdydrdtheta},~\eqref{eq:componentscyl} and the Pythagorean trigonometric identity,
be expressed in cylindrical coordinates:
\begin{equation}
\label{eq:H1vecnormcylcoord}
\begin{alignedat}{1}
( \vec{\breve{u}}, \vec{\breve{v}} )_{(H^1(\breve{\Omega}))^3}
& = \int_{\tilde{\Omega}}
  \Bigl(
    u_r \bar{v}_r + u_{\theta} \bar{v}_{\theta} + u_z \bar{v}_z \\
    & \qquad \quad + \partial_r u_r \partial_r \bar{v}_r
                   + \frac{1}{r^2} \, \partial_{\theta} u_r \partial_{\theta} \bar{v}_r
                   + \partial_z u_r \partial_z \bar{v}_r
                   + \frac{1}{r^2} \, u_r \bar{v}_r \\
    & \qquad \quad + \partial_r u_{\theta} \partial_r \bar{v}_{\theta}
                   + \frac{1}{r^2} \, \partial_{\theta} u_{\theta} \partial_{\theta} \bar{v}_{\theta}
                   + \partial_z u_{\theta} \partial_z \bar{v}_{\theta}
                   + \frac{1}{r^2} \, u_{\theta} \bar{v}_{\theta} \\
    & \qquad \quad + \frac{1}{r^2} \,
    \bigl(
        (\partial_{\theta} u_{\theta}) \bar{v}_r
      + u_r (\partial_{\theta} \bar{v}_{\theta})
      - (\partial_{\theta} u_r) \bar{v}_{\theta}
      - u_{\theta} (\partial_{\theta} \bar{v}_r)
    \bigr) \\
    & \qquad \quad + \partial_r u_z \partial_r \bar{v}_z
                   + \frac{1}{r^2} \, \partial_{\theta} u_z \partial_{\theta} \bar{v}_z
                   + \partial_z u_z \partial_z \bar{v}_z
  \Bigr) \, r \, \text{d}r \, \text{d}\theta \, \text{d}z.
\end{alignedat}
\end{equation}

We now consider Fourier expansions
\begin{align}
\label{eq:fourier_q}
p & = \frac{1}{\sqrt{2 \pi}} \, \sum_{k \in \mathbb{Z}} p^k(r, z) e^{ik \theta}, \qquad
q = \frac{1}{\sqrt{2 \pi}} \, \sum_{k \in \mathbb{Z}} q^k(r, z) e^{ik \theta}, \\
\label{eq:fourier_v}
\vec{u} & = \frac{1}{\sqrt{2 \pi}} \, \sum_{k \in \mathbb{Z}} \vec{u}^k(r, z) e^{ik \theta}, \qquad
\vec{v} = \frac{1}{\sqrt{2 \pi}} \, \sum_{k \in \mathbb{Z}} \vec{v}^k(r, z) e^{ik \theta},
\end{align}
where $\vec{v} = \left(v_r, \, v_{\theta}, \, v_z \right)^T$ and
$\vec{v}^k = \left(v_r^k, \, v_{\theta}^k, \, v_z^k \right)^T$.
Using the orthogonality on $\left[-\pi, \pi \right]$ of the
family $\left\{ e^{ik \theta} \right\}_{k = -\infty}^{+\infty}$ of basis functions,
we obtain (letting $\bar{q}^k$, as before, denote the complex conjugate of $q^k$):
\begin{equation}
\label{eq:parseval1}
\begin{alignedat}{1}
\int_{-\pi}^{\pi} p \hspace{0.5pt} \bar{q} \, \text{d}\theta &  =
\frac{1}{2 \pi} \, \int_{-\pi}^{\pi}
\Bigl(
  \sum_{k \in \mathbb{Z}} p^k(r, z) e^{ik \theta}
\Bigr)
\Bigl(
  \sum_{k' \in \mathbb{Z}} \bar{q}^{k'}(r, z) e^{-ik' \theta}
\Bigr)
\, \text{d}\theta \\
& = \frac{1}{2 \pi} \,
\sum_{k, \, k' \in \mathbb{Z}}
p^k(r, z) \bar{q}^{k'}(r, z)
\int_{-\pi}^{\pi} e^{i(k-k') \theta} \, \text{d}\theta \\
& = \sum_{k \in \mathbb{Z}}
p^k(r, z) \bar{q}^{k}(r, z),
\end{alignedat}
\end{equation}
and, similiarly, in four other representative cases:
\begin{equation}
\label{eq:parseval2}
\begin{alignedat}{1}
\int_{-\pi}^{\pi} \partial_r u_r \partial_r \bar{v}_r \, \text{d}\theta  &
   = \sum_{k \in \mathbb{Z}} \partial_r u_r^k (r, z) \partial_r \bar{v}_r^k (r, z), \\
\int_{-\pi}^{\pi} \partial_{\theta} u_r \partial_{\theta} \bar{v}_r \, \text{d}\theta  &
   = \sum_{k \in \mathbb{Z}} k^2 u_r^k (r, z) \bar{v}_r^k (r, z), \\
\int_{-\pi}^{\pi} (\partial_{\theta} u_{\theta}) \bar{v}_r \, \text{d}\theta  &
= \sum_{k \in \mathbb{Z}} ik \, u_{\theta}^k(r, z) \bar{v}_r^k(r, z), \\
\int_{-\pi}^{\pi} {u}_{\theta} (\partial_{\theta} \bar{v}_r) \, \text{d}\theta  &
= \sum_{k \in \mathbb{Z}} u_{\theta}^k(r, z) (-ik) \, \bar{v}_r^k(r, z).
\end{alignedat}
\end{equation}
Inserting~\eqref{eq:parseval1}--\eqref{eq:parseval2} (and analogous results for the remaining $\theta$-integrals) into
~\eqref{eq:l2normcylcoord}, \eqref{eq:H1vecnormcylcoord} gives
\begin{equation}
\label{eq:l2innerproductcylcoord2Dsum}
( \breve{p}, \breve{q} )_{L^2(\breve{\Omega})} = \sum_{k \in \mathbb{Z}} \int_{\Omega} p^k \bar{q}^k \, r \, \text{d}r \, \text{d}z,
\end{equation}
\begin{equation}
\label{eq:H1vecinnerproductcylcoord2Dsum}
\begin{alignedat}{1}
( \vec{\breve{u}}, \vec{\breve{v}} )_{(H^1(\breve{\Omega}))^3} & =
\sum_{k \in \mathbb{Z}} \int_{\Omega}
\Bigl(
  u_r^k \bar{v}_r^k + u_{\theta}^k \bar{v}_{\theta}^k + u_z^k \bar{v}_z^k \\
  & \qquad \quad + \partial_r u_r^k \partial_r \bar{v}_r^k + \partial_z u_r^k \partial_z \bar{v}_r^k
      + \frac{1+k^2}{r^2} \, u_r^k \bar{v}_r^k + \frac{2ik}{r^2} \, u_{\theta}^k \bar{v}_r^k \\
  & \qquad \quad + \partial_r u_{\theta}^k \partial_r \bar{v}_{\theta}^k
      + \partial_z u_{\theta}^k \partial_z \bar{v}_{\theta}^k
      + \frac{1+k^2}{r^2} \, u_{\theta}^k \bar{v}_{\theta}^k
      - \frac{2ik}{r^2} \, u_r^k \bar{v}_{\theta}^k \\
  & \qquad \quad + \partial_r u_z^k \partial_r \bar{v}_z^k
  + \partial_z u_z^k \partial_z \bar{v}_z^k + \frac{k^2}{r^2} \, u_z^k \bar{v}_z^k
\Bigr) \, r \, \text{d}r \, \text{d}z,
\end{alignedat}
\end{equation}
expressing the $L^2(\breve{\Omega})$-inner product and the $\bigl(H^1(\breve{\Omega})\bigr)^3$-inner product as sums,
over all wavenumbers, of weighted integrals over the half section $\Omega$ of the Fourier coefficients
and their derivatives.

The corresponding decompositions of the associated norms are:
\begin{equation}
\label{eq:l2normcylcoord2Dsum}
\| \breve{q} \|_{L^2(\breve{\Omega})}^2 = \sum_{k \in \mathbb{Z}} \int_{\Omega} | q^k |^2 \, r \, \text{d}r \, \text{d}z,
\end{equation}
\begin{equation}
\label{eq:H1vecnormcylcoord2Dsum}
\begin{alignedat}{1}
\| \vec{\breve{v}} \|_{(H^1(\breve{\Omega}))^3}^2 & =
\sum_{k \in \mathbb{Z}} \int_{\Omega}
\Bigl(
  | v_r^k |^2 + | v_{\theta}^k |^2 + | v_z^k |^2 \\
  & \qquad \quad + | \partial_r v_r^k |^2 + | \partial_z v_r^k |^2
      + \frac{1+k^2}{r^2} \, | v_r^k |^2 + \frac{2ik}{r^2} \, v_{\theta}^k \bar{v}_r^k \\
  & \qquad \quad + | \partial_r v_{\theta}^k |^2 + | \partial_z v_{\theta}^k |^2
      + \frac{1+k^2}{r^2} \, | v_{\theta}^k |^2 - \frac{2ik}{r^2} \, v_r^k \bar{v}_{\theta}^k \\
  & \qquad \quad + | \partial_r v_z^k |^2 + | \partial_z v_z^k |^2 + \frac{k^2}{r^2} \, | v_z^k |^2
\Bigr) \, r \, \text{d}r \, \text{d}z.
\end{alignedat}
\end{equation}

\subsection{Weighted Sobolev spaces on \texorpdfstring{$\Omega$}{\unichar{"03A9}}}
\label{subsec:weighted_Sobolev_spaces}
Led by~\eqref{eq:l2normcylcoord2Dsum}--\eqref{eq:H1vecnormcylcoord2Dsum},
where each term is an integral over $\Omega$ with weight $r$ or $r^{-1}$,
we first introduce the spaces
\begin{align*}
L^2_1(\Omega) & := \left\{v | \, v \colon \Omega \rightarrow \mathbb{C} \text{ measurable}, \;
  \int_{\Omega} |v(r, z)|^2 \, r \, \text{d}r \, \text{d}z < + \infty \right\}, \\
L^2_{-1}(\Omega) & := \left\{v | \, v \colon \Omega \rightarrow \mathbb{C} \text{ measurable}, \;
  \int_{\Omega} |v(r, z)|^2 \, r^{-1} \, \text{d}r \, \text{d}z < + \infty \right\},
\end{align*}
equipped with the natural norms
\begin{align*}
\| v \|_{L^2_1(\Omega)} & := \Bigl( \int_{\Omega} |v(r, z)|^2 \, r \, \text{d}r \, \text{d}z \Bigr)^{\frac{1}{2}}, \\
\| v \|_{L^2_{-1}(\Omega)} & := \Bigl( \int_{\Omega} |v(r, z)|^2 \, r^{-1} \, \text{d}r \, \text{d}z \Bigr)^{\frac{1}{2}}.
\end{align*}

Next, we define $H^1_1(\Omega)$ as the space of functions in $L^2_1(\Omega)$
such that their partial derivatives (being taken in the sense of distributions) of order $1$
belong to $L^2_1(\Omega)$,
equipped with the semi-norm
\begin{align*}
| v |_{H^1_1(\Omega)} := \left( \| \partial_r v \|^2_{L^2_1(\Omega)} + \| \partial_z v \|^2_{L^2_1(\Omega)} \right)^{\frac{1}{2}},
\end{align*}
and norm
\begin{align*}
\| v \|_{H^1_1(\Omega)} := \left( \| v \|^2_{L^2_1(\Omega)} + | v |^2_{H^1_1(\Omega)} \right)^{\frac{1}{2}}.
\end{align*}
\begin{remark}
The definition can be extended in a natural way to $H^m_1(\Omega)$ for an arbitrary integer $m \geq 2$
and further, by interpolation, to $H^s_1(\Omega)$ for non-integer $s > 0$.
\end{remark}

We will also need the weighted space
\begin{align*}
V^1_1(\Omega) := H^1_1(\Omega) \cap L^2_{-1}(\Omega),
\end{align*}
equipped with the norm
\begin{align*}
\| v \|_{V^1_1(\Omega)} := \left( \| v \|^2_{L^2_{-1}(\Omega)} + | v |^2_{H^1_1(\Omega)} \right)^{\frac{1}{2}}.
\end{align*}
It can be proved \cite[Proposition 4.1]{mercier_raugel_1982} that all functions in $V^1_1(\Omega)$ have a null trace on
the part $\Gamma_0$ of the boundary contained in the $z$-axis.

We finally introduce the subspaces
\begin{equation}
\label{eq:defl210}
L^2_{1,0}(\Omega) := \left\{q | \, q \in L^2_1(\Omega), \;
  \int_{\Omega} q(r, z) \, r \, \text{d}r \, \text{d}z = 0 \right\},
\end{equation}
consisting of functions in $L^2_1(\Omega)$ with weighted integral equal to zero, and
\begin{align*}
H^1_{1 \scriptstyle{\diamond}}(\Omega) & :=
  \left\{v | \, v \in H^1_1(\Omega), \; v = 0 \text{ on } \Gamma \right\}, \\
V^1_{1 \scriptstyle{\diamond}}(\Omega) & :=
  \left\{v | \, v \in V^1_1(\Omega), \; v = 0 \text{ on } \Gamma \right\},
\end{align*}
consisting of functions in $H^1_1(\Omega)$ and $V^1_1(\Omega)$ that vanish on the part $\Gamma = \partial \Omega \backslash \Gamma_0$
of the boundary that is not contained in the $z$-axis.

All spaces defined above are Hilbert spaces for the inner products associated with the given norms.

\subsection{%
Characterization of
\texorpdfstring{$L^2(\breve{\Omega})$}{%
\ifpdfstringunicode{L\unichar{"00B2}(\unichar{"03A9})}{L2(Omega)}} and %
\texorpdfstring{$L^2_0(\breve{\Omega})$}{%
\ifpdfstringunicode{L\unichar{"00B2}\unichar{"2080}(\unichar{"03A9})}{L20(Omega)}}%
}
Recalling the identity~\eqref{eq:l2normcylcoord2Dsum}
we let the right-hand side terms, for all $k \in \mathbb{Z}$, define spaces $L_{(k)}^2(\Omega) := L^2_1(\Omega)$
for the Fourier coefficients $q^k$ of a function $\breve{q} \in L^2(\breve{\Omega})$, with norms
\begin{equation}
\label{eq:l2k_norm}
\| q^k \|_{L_{(k)}^2(\Omega)}^2 = \| q^k \|_{L^2_1(\Omega)}^2 = \int_{\Omega} | q^k |^2 \, r \, \text{d}r \, \text{d}z.
\end{equation}
Introducing the $l_2$-sum \cite[p$.$ 63]{MR2248303}
$\bigoplus_{2} \bigl\{ L_{(k)}^2(\Omega) \colon k \in \mathbb{Z} \bigr\}$, which is the subspace
of the Cartesian product $\times \bigl\{ L_{(k)}^2(\Omega) \colon k \in \mathbb{Z} \bigr\}$
consisting of all sequences $(q^k)_{k \in \mathbb{Z}}$ for which the norm
\begin{equation}
\label{eq:l2sum_of_l2k_norm}
\| (q^k) \|_{\bigoplus_{2} \{ L_{(k)}^2(\Omega) \colon k \in \mathbb{Z} \}} :=
\Bigl( \sum_{k \in \mathbb{Z}} \| q^k \|_{L_{(k)}^2(\Omega)}^2 \Bigr)^{\frac{1}{2}} < + \infty,
\end{equation}
and combining~\eqref{eq:l2normcylcoord2Dsum} with~\eqref{eq:l2k_norm}--\eqref{eq:l2sum_of_l2k_norm},
we obtain the following characterization of $L^2(\breve{\Omega})$:
\begin{theorem}
\label{theorem:charactl2}
The mapping
\begin{align*}
\breve{q} \mapsto (q^k)_{k \in \mathbb{Z}},
\end{align*}
defined by~\eqref{eq:q_cartesian_cyl} and~\eqref{eq:fourier_q}, is an isometric isomorphism between
$L^2(\breve{\Omega})$ and the $l_2$-sum
$\bigoplus_{2} \bigl\{ L_{(k)}^2(\Omega) \colon k \in \mathbb{Z} \bigr\}$:
\begin{equation}
\label{eq:l2_isometry}
\| \breve{q} \|_{L^2(\breve{\Omega})} =
\Bigl( \sum_{k \in \mathbb{Z}} \| q^k \|_{L_{(k)}^2(\Omega)}^2 \Bigr)^{\frac{1}{2}} =
\| (q^k) \|_{\bigoplus_{2} \{ L_{(k)}^2(\Omega) \colon k \in \mathbb{Z} \}},
\end{equation}
where, for all $k \in \mathbb{Z}$,
$L_{(k)}^2(\Omega) = L^2_1(\Omega)$.
\end{theorem}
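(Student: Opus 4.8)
The plan is to build on the Fourier decomposition of the norm in~\eqref{eq:l2normcylcoord2Dsum}, which already encodes the isometry, and to add the two remaining ingredients needed to conclude that the mapping is an isometric isomorphism: that it takes values in the $l_2$-sum, and that it is onto. To set~\eqref{eq:l2normcylcoord2Dsum} on a rigorous footing I would first invoke the change of variables~\eqref{eq:l2normcylcoord}, which identifies $L^2(\breve{\Omega})$ isometrically with $L^2(\tilde{\Omega})$ for the weight $r\,\text{d}r\,\text{d}\theta\,\text{d}z$. Writing $q$ for the cylindrical representative of $\breve{q}$, Fubini's theorem gives that for almost every $(r,z) \in \Omega$ the slice $\theta \mapsto q(r,\theta,z)$ belongs to $L^2(-\pi,\pi)$, with Fourier coefficients $q^k(r,z)$ as in~\eqref{eq:fourier_q}; Parseval's identity on $L^2(-\pi,\pi)$ then yields $\int_{-\pi}^{\pi} |q|^2\,\text{d}\theta = \sum_{k \in \mathbb{Z}} |q^k|^2$ pointwise in $(r,z)$. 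Multiplying by $r$, integrating over $\Omega$, and interchanging sum and integral by Tonelli's theorem (the summands being nonnegative) reproduces~\eqref{eq:l2_isometry}. Finiteness of the left-hand side forces $q^k \in L^2_1(\Omega) = L_{(k)}^2(\Omega)$ for every $k$ with $\sum_k \|q^k\|_{L_{(k)}^2(\Omega)}^2 < +\infty$, so the mapping is well-defined into the $l_2$-sum; it is linear because Fourier coefficients depend linearly on the function, and injective because it preserves norms.

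For surjectivity, I would take an arbitrary $(q^k)_{k \in \mathbb{Z}}$ in $\bigoplus_2 \bigl\{ L_{(k)}^2(\Omega) \colon k \in \mathbb{Z} \bigr\}$ and recover a preimage by Fourier synthesis. Setting $q_N := \frac{1}{\sqrt{2\pi}} \sum_{|k| \le N} q^k(r,z) e^{ik\theta}$, the orthogonality exploited in~\eqref{eq:parseval1} shows that $\|q_N - q_M\|_{L^2(\tilde{\Omega})}^2 = \sum_{M < |k| \le N} \|q^k\|_{L_{(k)}^2(\Omega)}^2$ for $M < N$, which tends to $0$ since the series of squared norms converges; hence $(q_N)$ is Cauchy, and completeness of $L^2(\tilde{\Omega})$ provides a limit $q$. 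A final use of orthogonality identifies the $k$-th Fourier coefficient of $q$ as exactly $q^k$, and transporting $q$ back to Cartesian coordinates yields $\breve{q} \in L^2(\breve{\Omega})$ mapping to the prescribed sequence.

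The isometry itself is essentially bookkeeping on top of~\eqref{eq:l2normcylcoord2Dsum}, so I expect the genuine work to lie in surjectivity, specifically in justifying the $L^2(\tilde{\Omega})$-convergence of the synthesis series and confirming that its limit has the prescribed coefficients. The only other delicate points are measure-theoretic: ensuring via Fubini that the $\theta$-slices are square-integrable and that each $q^k$ is a measurable function of $(r,z)$, and via Tonelli that the interchange of summation and integration is legitimate. Once these are secured, the remaining assertions follow directly.
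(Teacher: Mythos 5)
Your proposal is correct and follows essentially the same route as the paper, which obtains the isometry by combining the change of variables~\eqref{eq:l2normcylcoord} with the orthogonality computation~\eqref{eq:parseval1} to reach the norm decomposition~\eqref{eq:l2normcylcoord2Dsum}, and then reads the theorem off the definitions~\eqref{eq:l2k_norm}--\eqref{eq:l2sum_of_l2k_norm}. The only difference is that you make explicit what the paper leaves implicit: the Fubini/Tonelli justification of the slicewise Parseval identity and, more substantially, the surjectivity argument via Fourier synthesis and completeness of $L^2(\tilde{\Omega})$, both of which are standard and carried out correctly.
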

Since, for $\breve{q} \in L^2(\breve{\Omega})$, by~\eqref{eq:q_cartesian_cyl} and~\eqref{eq:fourier_q},
\begin{align*}
\int_{\breve{\Omega}} \breve{q}(x, y, z) \, \text{d}x \, \text{d}y \, \text{d}z & =
\int_{\tilde{\Omega}} q(r, \theta, z) \, r \, \text{d}r \, \text{d}\theta \, \text{d}z \\
& = \frac{1}{\sqrt{2 \pi}} \, \sum_{k \in \mathbb{Z}}
  \int_{\tilde{\Omega}} q^k(r, z) e^{ik \theta} \, r \, \text{d}r \, \text{d}\theta \, \text{d}z \\
& = \sqrt{2 \pi} \, \int_{\Omega} q^0(r, z) \, r \, \text{d}r \, \text{d}z,
\end{align*}
we also, recalling~\eqref{eq:defl210}, obtain a characterization of the subspace $L^2_0(\breve{\Omega})$ of $L^2(\breve{\Omega})$:
\begin{corollary}
The mapping
\begin{align*}
\breve{q} \mapsto (q^k)_{k \in \mathbb{Z}},
\end{align*}
defined by~\eqref{eq:q_cartesian_cyl} and~\eqref{eq:fourier_q}, is an isometric isomorphism between
$L^2_0(\breve{\Omega})$ and the $l_2$-sum $\bigoplus_{2} \bigl\{ L_{(k),0}^2(\Omega) \colon k \in \mathbb{Z} \bigr\}$, where
\begin{align*}
L_{(k),0}^2(\Omega) := \left\{
\begin{array}{rcl}
L^2_{1,0}(\Omega), & \text{if} & k=0, \\
L^2_{1}(\Omega), & \text{if} & k \neq 0,
\end{array}
\right.
\end{align*}
and, for all $k \in \mathbb{Z}$, $\| q^k \|_{L_{(k),0}^2(\Omega)} = \| q^k \|_{L_1^2(\Omega)}$.
\end{corollary}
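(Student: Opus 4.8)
The plan is to obtain this corollary by restricting the isometric isomorphism of Theorem~\ref{theorem:charactl2} to the subspace $L^2_0(\breve{\Omega})$ and identifying its image, rather than repeating the Fourier-decomposition argument from scratch. First I would recall that, by Theorem~\ref{theorem:charactl2}, the map $\breve{q} \mapsto (q^k)_{k \in \mathbb{Z}}$ is an isometric isomorphism of $L^2(\breve{\Omega})$ onto the $l_2$-sum $\bigoplus_{2} \bigl\{ L_{(k)}^2(\Omega) \colon k \in \mathbb{Z} \bigr\}$, where each $L_{(k)}^2(\Omega) = L^2_1(\Omega)$. Since $L^2_0(\breve{\Omega})$ is by definition the subspace of $L^2(\breve{\Omega})$ consisting of functions with vanishing integral over $\breve{\Omega}$, it suffices to determine the image of this subspace under the above map.

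The key step is the computation displayed immediately before the statement, which shows that for $\breve{q} \in L^2(\breve{\Omega})$ only the zeroth Fourier coefficient contributes to the integral,
\[
\int_{\breve{\Omega}} \breve{q} \, \text{d}x \, \text{d}y \, \text{d}z = \sqrt{2 \pi} \, \int_{\Omega} q^0 \, r \, \text{d}r \, \text{d}z.
\]
Hence the constraint $\int_{\breve{\Omega}} \breve{q} \, \text{d}x \, \text{d}y \, \text{d}z = 0$ defining $L^2_0(\breve{\Omega})$ is equivalent to $\int_{\Omega} q^0 \, r \, \text{d}r \, \text{d}z = 0$, that is, to $q^0 \in L^2_{1,0}(\Omega)$ in the notation of~\eqref{eq:defl210}, while no condition is imposed on the remaining coefficients $q^k$, $k \neq 0$. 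This identifies the image of $L^2_0(\breve{\Omega})$ precisely as the set of sequences $(q^k)$ in $\bigoplus_{2} \bigl\{ L_{(k)}^2(\Omega) \colon k \in \mathbb{Z} \bigr\}$ whose zeroth entry lies in $L^2_{1,0}(\Omega)$; since $L^2_{1,0}(\Omega) \subseteq L^2_1(\Omega)$ and the $l_2$-norm of this subset is simply the restriction of the ambient $l_2$-norm, this set is exactly the $l_2$-sum $\bigoplus_{2} \bigl\{ L_{(k),0}^2(\Omega) \colon k \in \mathbb{Z} \bigr\}$.

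Finally I would invoke the elementary fact that the restriction of an isometric isomorphism to a subspace, viewed as a map onto its image, is again an isometric isomorphism. Applying this to the map of Theorem~\ref{theorem:charactl2} restricted to $L^2_0(\breve{\Omega})$, together with the identification of the image above, yields that $\breve{q} \mapsto (q^k)_{k \in \mathbb{Z}}$ is an isometric isomorphism between $L^2_0(\breve{\Omega})$ and $\bigoplus_{2} \bigl\{ L_{(k),0}^2(\Omega) \colon k \in \mathbb{Z} \bigr\}$, and the norm identity $\| q^k \|_{L_{(k),0}^2(\Omega)} = \| q^k \|_{L^2_1(\Omega)}$ is immediate because $L^2_{1,0}(\Omega)$ carries the $L^2_1(\Omega)$-norm. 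I do not expect any genuine obstacle here: the one point that requires care is surjectivity onto the image subspace, namely that every sequence with zeroth entry in $L^2_{1,0}(\Omega)$ arises from some $\breve{q} \in L^2_0(\breve{\Omega})$, and this follows directly from the surjectivity already supplied by Theorem~\ref{theorem:charactl2} combined with the equivalence of the two zero-mean conditions established in the second step.
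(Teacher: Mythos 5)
Your proposal is correct and follows essentially the same route as the paper: the paper likewise derives the corollary by combining Theorem~\ref{theorem:charactl2} with the displayed computation showing $\int_{\breve{\Omega}} \breve{q} \, \text{d}x \, \text{d}y \, \text{d}z = \sqrt{2\pi} \int_{\Omega} q^0 \, r \, \text{d}r \, \text{d}z$, so that the zero-mean condition on $\breve{q}$ is equivalent to $q^0 \in L^2_{1,0}(\Omega)$ with no constraint on the other modes. Your added remarks on surjectivity onto the image and on restricting an isometric isomorphism to a subspace are correct and merely make explicit what the paper leaves implicit.
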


\subsection{%
Characterization of
\texorpdfstring{$\bigl(H^1(\breve{\Omega})\bigr)^3$}{%
\ifpdfstringunicode{(H\unichar{"00B9}(\unichar{"03A9}))\unichar{"00B3}}{(H1(Omega))3}} and %
\texorpdfstring{$\bigl(H^1_0(\breve{\Omega})\bigr)^3$}{%
\ifpdfstringunicode{(H\unichar{"00B9}\unichar{"2080}(\unichar{"03A9}))\unichar{"00B3}}{(H10(Omega))3}}%
}
Similarly, led by the right-hand side terms in the identity~\eqref{eq:H1vecnormcylcoord2Dsum}, for all $k \in \mathbb{Z}$, we define spaces
$\boldsymbol{H}_{(k)}^1(\Omega)$ for the Fourier coefficient triples $\vec{v}^k = \left(v_r^k, v_{\theta}^k, v_z^k\right)^T$
of a vector function $\vec{\breve{v}} \in \bigl(H^1(\breve{\Omega})\bigr)^3$, by the norms
\begin{equation}
\label{eq:H1veck_norm}
\begin{alignedat}{1}
\| \vec{v}^k \|_{\boldsymbol{H}_{(k)}^1(\Omega)}^2 & : =
\int_{\Omega}
\Bigl(
  | v_r^k |^2 + | v_{\theta}^k |^2 + | v_z^k |^2 \\
  & \qquad \quad + | \partial_r v_r^k |^2 + | \partial_z v_r^k |^2
      + \frac{1+k^2}{r^2} \, | v_r^k |^2 + \frac{2ik}{r^2} \, v_{\theta}^k \bar{v}_r^k \\
  & \qquad \quad + | \partial_r v_{\theta}^k |^2 + | \partial_z v_{\theta}^k |^2
      + \frac{1+k^2}{r^2} \, | v_{\theta}^k |^2 - \frac{2ik}{r^2} \, v_r^k \bar{v}_{\theta}^k \\
  & \qquad \quad + | \partial_r v_z^k |^2 + | \partial_z v_z^k |^2 + \frac{k^2}{r^2} \, | v_z^k |^2
\Bigr) \, r \, \text{d}r \, \text{d}z.
\end{alignedat}
\end{equation}
To see that~\eqref{eq:H1veck_norm} satisfies the properties of a norm, we consider the function $\vec{\breve{v}}^k \in (H^1(\breve{\Omega}))^3$
corresponding to a single Fourier coefficient $\vec{v}^k \in \boldsymbol{H}_{(k)}^1(\Omega)$:
\begin{align*}
\vec{\breve{v}}^k(x, y, z) := \frac{1}{\sqrt{2 \pi}} \, \mathcal{R}_{\theta} \vec{v}^k(r, z) e^{ik \theta},
\end{align*}
and, from~\eqref{eq:H1vecnormcylcoord2Dsum}, note that
\begin{align}
\label{eq:norm_single_mode}
\| \vec{v}^k \|_{\boldsymbol{H}_{(k)}^1(\Omega)} = \| \vec{\breve{v}}^k \|_{(H^1(\breve{\Omega}))^3}.
\end{align}
From~\eqref{eq:norm_single_mode} and the norm properties of $\bigl(H^1(\breve{\Omega})\bigr)^3$, we immediately get:
\begin{align*}
\| \vec{v}^k \|_{\boldsymbol{H}_{(k)}^1(\Omega)} & \geq 0, \\
\| \vec{v}^k \|_{\boldsymbol{H}_{(k)}^1(\Omega)} & = 0 \quad \Leftrightarrow \quad
   \| \vec{\breve{v}}^k \|_{(H^1(\breve{\Omega}))^3} = 0 \quad \Leftrightarrow \quad
   \vec{\breve{v}}^k = 0 \quad \Leftrightarrow \quad \vec{v}^k = 0, \\
\| c \vec{v}^k \|_{\boldsymbol{H}_{(k)}^1(\Omega)} & = \| c \vec{\breve{v}}^k \|_{(H^1(\breve{\Omega}))^3}
   = |c| \, \| \vec{\breve{v}}^k \|_{(H^1(\breve{\Omega}))^3}
   = |c| \, \| \vec{v}^k \|_{\boldsymbol{H}_{(k)}^1(\Omega)}, \\
\| \vec{v}^k + \vec{w}^k \|_{\boldsymbol{H}_{(k)}^1(\Omega)} & =
    \| \vec{\breve{v}}^k + \vec{\breve{w}}^k\|_{(H^1(\breve{\Omega}))^3} \\
    & \leq \| \vec{\breve{v}}^k \|_{(H^1(\breve{\Omega}))^3} + \| \vec{\breve{w}}^k \|_{(H^1(\breve{\Omega}))^3} \\
    & = \| \vec{v}^k \|_{\boldsymbol{H}_{(k)}^1(\Omega)} + \| \vec{w}^k \|_{\boldsymbol{H}_{(k)}^1(\Omega)}.
\end{align*}
Also from~\eqref{eq:norm_single_mode}, the completeness of $\boldsymbol{H}_{(k)}^1(\Omega)$ is a consequence of
the completeness of $(H^1(\breve{\Omega}))^3$.

We now characterize the spaces $\boldsymbol{H}_{(k)}^1(\Omega)$ in terms of the weighted spaces defined in
Section~\ref{subsec:weighted_Sobolev_spaces}.
Beginning with the case $k=0$, from~\eqref{eq:H1veck_norm}:
\begin{equation}
\label{eq:H1vec0_norm}
\| \vec{v}^0 \|_{\boldsymbol{H}_{(0)}^1(\Omega)}^2 = \| \vec{v}^0 \|_{(L^2_1(\Omega))^3}^2 +
\| v_r^0 \|_{V^1_1(\Omega)}^2 + \| v_{\theta}^0 \|_{V^1_1(\Omega)}^2 + | v_z^0 |_{H^1_1(\Omega)}^2,
\end{equation}
and we deduce that
\begin{align}
\label{eq:H1vec0_space}
\boldsymbol{H}_{(0)}^1(\Omega) = V^1_1(\Omega) \times V^1_1(\Omega) \times H^1_1(\Omega).
\end{align}
For the cases $k=\pm 1$,~\eqref{eq:H1veck_norm} gives
\begin{equation*}
\begin{alignedat}{1}
\| \vec{v}^{\pm 1} \|_{\boldsymbol{H}_{(\pm 1)}^1(\Omega)}^2 & =
  \| \vec{v}^{\pm 1} \|_{(L^2_1(\Omega))^3}^2 +
  | v_r^{\pm 1} |_{H^1_1(\Omega)}^2 + | v_{\theta}^{\pm 1} |_{H^1_1(\Omega)}^2 + \| v_z^{\pm 1} \|_{V^1_1(\Omega)}^2 \\
& \qquad + 2 \int_{\Omega} \Bigl(
| v_r^{\pm 1} |^2 + | v_{\theta}^{\pm 1} |^2 \pm i (
v_{\theta}^{\pm 1} \bar{v}_r^{\pm 1} - v_r^{\pm 1} \bar{v}_{\theta}^{\pm 1} )
\Bigr) \, \frac{1}{r} \, \text{d}r \, \text{d}z,
\end{alignedat}
\end{equation*}
and by noting that
\begin{align*}
| v_r^{\pm 1} \pm i v_{\theta}^{\pm 1} |^2 & =
( v_r^{\pm 1} \pm i v_{\theta}^{\pm 1} ) ( \bar{v}_r^{\pm 1} \mp i \bar{v}_{\theta}^{\pm 1} ) \\
& = | v_r^{\pm 1} |^2 + | v_{\theta}^{\pm 1} |^2 \pm i ( v_{\theta}^{\pm 1} \bar{v}_r^{\pm 1} - v_r^{\pm 1} \bar{v}_{\theta}^{\pm 1} ),
\end{align*}
we obtain
\begin{equation}
\label{eq:H1vec1_norm}
\begin{alignedat}{1}
\| \vec{v}^{\pm 1} \|_{\boldsymbol{H}_{(\pm 1)}^1(\Omega)}^2 & =
\| \vec{v}^{\pm 1} \|_{(L^2_1(\Omega))^3}^2 + 2 \, \| v_r^{\pm 1} \pm i v_{\theta}^{\pm 1} \|_{L^2_{-1}(\Omega)}^2 \\
& \qquad + | v_r^{\pm 1} |_{H^1_1(\Omega)}^2 + | v_{\theta}^{\pm 1} |_{H^1_1(\Omega)}^2 + \| v_z^{\pm 1} \|_{V^1_1(\Omega)}^2,
\end{alignedat}
\end{equation}
from which we deduce that, for $k = \pm 1$:
\begin{equation}
\label{eq:H1vec1_space}
\boldsymbol{H}_{(k)}^1(\Omega) = \left\{ \vec{v}^k | \,
\vec{v}^k \in H^1_1(\Omega) \times H^1_1(\Omega) \times V^1_1(\Omega), \;
v_r^k + ik v_{\theta}^k \in L^2_{-1}(\Omega) \right\}.
\end{equation}
When $|k| \geq 2$,~\eqref{eq:H1veck_norm} gives
\begin{equation}
\label{eq:H1vec2_norm}
\begin{alignedat}{1}
\| \vec{v}^k \|_{\boldsymbol{H}_{(k)}^1(\Omega)}^2 & =
\| \vec{v}^k \|_{(L^2_1(\Omega))^3}^2 + | \vec{v}^k |_{(H^1_1(\Omega))^3}^2 \\
& \qquad \quad + (1 + k^2) \left( \| v_r^k \|_{L^2_{-1}(\Omega)}^2 + \| v_{\theta}^k \|_{L^2_{-1}(\Omega)}^2 \right) \\
& \qquad \quad + k^2 \, \| v_z^k \|_{L^2_{-1}(\Omega)}^2 \\
& \qquad \quad + 2ik \int_{\Omega} \left( v_{\theta}^k \bar{v}_r^k - v_r^k \bar{v}_{\theta}^k \right) \, \frac{1}{r} \, \text{d}r \, \text{d}z.
\end{alignedat}
\end{equation}
Noting that
\begin{align*}
i \big( v_{\theta}^k \bar{v}_r^k - v_r^k \bar{v}_{\theta}^k \bigr) = 2 \operatorname{Im}\bigl( v_r^k \bar{v}_{\theta}^k \bigr),
\end{align*}
confirming that the right-hand sides are real for all $k \in \mathbb{Z}$, we can estimate the last term
\begin{align*}
\left| 2ik \int_{\Omega} \left( v_{\theta}^k \bar{v}_r^k - v_r^k \bar{v}_{\theta}^k \right) \, \frac{1}{r} \, \text{d}r \, \text{d}z \right| &
\leq 4 |k| \int_{\Omega} |v_r^k| \, |{v}_{\theta}^k| \, \frac{1}{r} \, \text{d}r \, \text{d}z \\
& \leq 2 |k| \, \left( \| v_r^k \|_{L^2_{-1}(\Omega)}^2 + \| v_{\theta}^k \|_{L^2_{-1}(\Omega)}^2 \right),
\end{align*}
resulting in the inequalities
\begin{equation}
\begin{alignedat}{1}
\label{eq:inequalities}
& \| \vec{v}^k \|_{(L^2_1(\Omega))^3}^2 + | \vec{v}^k |_{(H^1_1(\Omega))^3}^2 \\
& \qquad + (|k| - 1)^2 \left(\| v_r^k \|_{L^2_{-1}(\Omega)}^2 + \| v_{\theta}^k \|_{L^2_{-1}(\Omega)}^2 \right)
+ k^2 \, \| v_z^k \|_{L^2_{-1}(\Omega)}^2 \\
& \leq \; \| \vec{v}^k \|_{\boldsymbol{H}_{(k)}^1(\Omega)}^2 \\
& \leq \; \| \vec{v}^k \|_{(L^2_1(\Omega))^3}^2 + | \vec{v}^k |_{(H^1_1(\Omega))^3}^2 \\
& \qquad + (|k| + 1)^2 \left(\| v_r^k \|_{L^2_{-1}(\Omega)}^2 + \| v_{\theta}^k \|_{L^2_{-1}(\Omega)}^2 \right)
+ k^2 \, \| v_z^k \|_{L^2_{-1}(\Omega)}^2.
\end{alignedat}
\end{equation}
Since $|k|\geq 2$,
\begin{align*}
(|k| - 1)^2 & = \Bigl( 1 - \frac{1}{|k|} \Bigr)^2 \, k^2 \geq \frac{1}{4} \, k^2, \\
(|k| + 1)^2 & = \Bigl( 1 + \frac{1}{|k|} \Bigr)^2 \, k^2 \leq \frac{9}{4} \, k^2,
\end{align*}
which, combined with~\eqref{eq:inequalities}, shows that the norm $\| \cdot \|_{\boldsymbol{H}_{(k)}^1(\Omega)}$
is equivalent to the norm $\| \cdot \|_{\boldsymbol{H}_{(k)*}^1(\Omega)}$:
\begin{align*}
\frac{1}{2} \, \| \vec{v}^k \|_{\boldsymbol{H}_{(k)*}^1(\Omega)} \leq
\| \vec{v}^k \|_{\boldsymbol{H}_{(k)}^1(\Omega)} \leq
\frac{3}{2} \, \| \vec{v}^k \|_{\boldsymbol{H}_{(k)*}^1(\Omega)},
\end{align*}
where $\| \cdot \|_{\boldsymbol{H}_{(k)*}^1(\Omega)}$ is defined by
\begin{equation}
\label{eq:H1vec2_ekv_norm}
\begin{alignedat}{1}
\| \vec{v}^k \|_{\boldsymbol{H}_{(k)*}^1(\Omega)}^2 & :=
\| \vec{v}^k \|_{(L^2_1(\Omega))^3}^2 + | \vec{v}^k |_{(H^1_1(\Omega))^3}^2 + k^2 \| \vec{v}^k \|_{(L^2_{-1}(\Omega))^3}^2.
\end{alignedat}
\end{equation}
We deduce that, for $|k|\geq 2$,
\begin{align}
\label{eq:H1vec2_space}
\boldsymbol{H}_{(k)}^1(\Omega) = V^1_1(\Omega) \times V^1_1(\Omega) \times V^1_1(\Omega).
\end{align}
We summarize our results in the following characterization of $\bigl(H^1(\breve{\Omega})\bigr)^3$:
\begin{theorem}
\label{theorem:characth1}
The mapping
\begin{align*}
\vec{\breve{v}} \mapsto (\vec{v}^k)_{k \in \mathbb{Z}},
\end{align*}
defined by~\eqref{eq:vec_v_cartesian_cyl} and \eqref{eq:fourier_v},
is an isometric isomorphism between $\bigl(H^1(\breve{\Omega})\bigr)^3$ and the $l_2$-sum
$\bigoplus_{2} \bigl\{ \boldsymbol{H}_{(k)}^1(\Omega) \colon k \in \mathbb{Z} \bigr\}$:
\begin{align}
\label{eq:H1vec_isometry}
\| \vec{\breve{v}} \|_{(H^1(\breve{\Omega}))^3} =
  \Bigl( \sum_{k \in \mathbb{Z}} \| \vec{v}^k \|^2_{\boldsymbol{H}_{(k)}^1(\Omega)} \Bigr)^{\frac{1}{2}} =
  \| (\vec{v}^k) \|_{\bigoplus_{2} \{ \boldsymbol{H}_{(k)}^1(\Omega) \colon k \in \mathbb{Z} \}},
\end{align}
where, from~\eqref{eq:H1vec0_space},~\eqref{eq:H1vec1_space},~\eqref{eq:H1vec2_space}:
\begin{align*}
\boldsymbol{H}_{(k)}^1(\Omega) := \left\{
\begin{array}{lcl}
  V^1_1(\Omega) \times V^1_1(\Omega) \times H^1_1(\Omega), & \text{if} & k=0, \\
  \left\{ \vec{v}^k \in H^1_1(\Omega) \times H^1_1(\Omega) \times V^1_1(\Omega) \right., && \\
\qquad \qquad \qquad \left. v_r^k + ik v_{\theta}^k \in L^2_{-1}(\Omega) \right\},
  & \text{if} & |k|=1, \\
 V^1_1(\Omega) \times V^1_1(\Omega) \times V^1_1(\Omega), & \text{if} & |k| \geq 2,
\end{array}
\right.
\end{align*}
and where the norms $\| \cdot \|_{\boldsymbol{H}_{(k)}^1(\Omega)}$
are given by~\eqref{eq:H1vec0_norm},~\eqref{eq:H1vec1_norm},~\eqref{eq:H1vec2_norm}:
\begin{align*}
\| \vec{v}^k \|_{\boldsymbol{H}_{(k)}^1(\Omega)}^2 := \left\{
\begin{array}{lcl}
  \| \vec{v}^k \|_{(L^2_1(\Omega))^3}^2 & & \\ [1.5ex]
    \qquad + \| v_r^k \|_{V^1_1(\Omega)}^2 + \| v_{\theta}^k \|_{V^1_1(\Omega)}^2 + | v_z^k |_{H^1_1(\Omega)}^2, & \text{if} & k=0, \\ [2ex]
  \| \vec{v}^k \|_{(L^2_1(\Omega))^3}^2 + 2 \, \| v_r^k + ik v_{\theta}^k \|_{L^2_{-1}(\Omega)}^2 & & \\ [1.5ex]
    \qquad + | v_r^k |_{H^1_1(\Omega)}^2 + | v_{\theta}^k |_{H^1_1(\Omega)}^2 + \| v_z^k \|_{V^1_1(\Omega)}^2, & \text{if} & |k|=1, \\ [2ex]
  \| \vec{v}^k \|_{(L^2_1(\Omega))^3}^2 + | \vec{v}^k |_{(H^1_1(\Omega))^3}^2 & & \\ [1.5ex]
    \qquad + (1 + k^2) \left( \| v_r^k \|_{L^2_{-1}(\Omega)}^2 + \| v_{\theta}^k \|_{L^2_{-1}(\Omega)}^2 \right) & & \\ [1.5ex]
    \qquad + k^2 \, \| v_z^k \|_{L^2_{-1}(\Omega)}^2 & & \\ [1.5ex]
    \qquad + 2ik \int_{\Omega} \bigl( v_{\theta}^k \bar{v}_r^k - v_r^k \bar{v}_{\theta}^k \bigr) \, \frac{1}{r} \, \text{d}r \, \text{d}z,
    & \text{if} & |k| \geq 2.
\end{array}
\right.
\end{align*}
For $|k| \geq 2$, we can use the equivalent norm $\| \cdot  \|_{\boldsymbol{H}_{(k)*}^1(\Omega)}$
defined by~\eqref{eq:H1vec2_ekv_norm}:
\begin{align*}
\| \vec{v}^k  \|_{\boldsymbol{H}_{(k)*}^1(\Omega)}^2 & :=
\| \vec{v}^k \|_{(L^2_1(\Omega))^3}^2 + | \vec{v}^k |_{(H^1_1(\Omega))^3}^2 + k^2 \| \vec{v}^k \|_{(L^2_{-1}(\Omega))^3}^2.
\end{align*}
\end{theorem}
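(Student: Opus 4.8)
The plan is to read the isometry off directly from the Fourier decomposition of the norm already derived in~\eqref{eq:H1vecnormcylcoord2Dsum}, and then to upgrade the resulting isometric embedding to a surjection by a density-and-closedness argument. First I would note that, by the very definition~\eqref{eq:H1veck_norm} of the norm on $\boldsymbol{H}_{(k)}^1(\Omega)$, identity~\eqref{eq:H1vecnormcylcoord2Dsum} reads precisely as
\begin{align*}
\| \vec{\breve{v}} \|_{(H^1(\breve{\Omega}))^3}^2 = \sum_{k \in \mathbb{Z}} \| \vec{v}^k \|_{\boldsymbol{H}_{(k)}^1(\Omega)}^2 ,
\end{align*}
which is exactly~\eqref{eq:H1vec_isometry}. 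This single equality does most of the work: for $\vec{\breve{v}} \in \bigl(H^1(\breve{\Omega})\bigr)^3$ the right-hand side is finite, so each Fourier coefficient $\vec{v}^k$ has finite $\boldsymbol{H}_{(k)}^1(\Omega)$-norm and the sequence $(\vec{v}^k)$ lies in the $l_2$-sum; the map is therefore well defined, linear, and norm-preserving, whence also injective. The explicit descriptions of the spaces $\boldsymbol{H}_{(k)}^1(\Omega)$ and their norms are then just a transcription of the case analysis~\eqref{eq:H1vec0_space},~\eqref{eq:H1vec1_space},~\eqref{eq:H1vec2_space} already carried out above.

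It remains to prove surjectivity. Here I would use that the range of a linear isometry defined on a complete space is itself complete, hence closed in $\bigoplus_{2} \bigl\{ \boldsymbol{H}_{(k)}^1(\Omega) \colon k \in \mathbb{Z} \bigr\}$. For each fixed $k$ and each $\vec{v}^k \in \boldsymbol{H}_{(k)}^1(\Omega)$, the single-mode field $\vec{\breve{v}}^k$ belongs to $\bigl(H^1(\breve{\Omega})\bigr)^3$ by construction and, by~\eqref{eq:norm_single_mode}, is mapped to the finitely supported sequence carrying $\vec{v}^k$ in slot $k$ and zeros elsewhere. Finite sums of such single modes therefore lie in the range, and since finitely supported sequences are dense in any $l_2$-sum, the range is a dense closed subspace and hence all of $\bigoplus_{2} \bigl\{ \boldsymbol{H}_{(k)}^1(\Omega) \colon k \in \mathbb{Z} \bigr\}$.

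Alternatively, and more concretely, I could exhibit the preimage of a given $(\vec{v}^k)$ directly: the partial sums $\vec{\breve{V}}_N := \frac{1}{\sqrt{2 \pi}} \sum_{|k| \le N} \mathcal{R}_{\theta} \vec{v}^k e^{ik\theta}$ lie in $\bigl(H^1(\breve{\Omega})\bigr)^3$, and because distinct modes are orthogonal there—the inner-product decomposition~\eqref{eq:H1vecinnerproductcylcoord2Dsum} shows that the $k$-th summand couples only the $k$-th coefficients—one has $\| \vec{\breve{V}}_N - \vec{\breve{V}}_M \|_{(H^1(\breve{\Omega}))^3}^2 = \sum_{M < |k| \le N} \| \vec{v}^k \|_{\boldsymbol{H}_{(k)}^1(\Omega)}^2$; convergence of the defining $l_2$-sum makes $(\vec{\breve{V}}_N)$ Cauchy, and completeness of $\bigl(H^1(\breve{\Omega})\bigr)^3$ yields a limit $\vec{\breve{v}}$. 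I expect the main obstacle to be the bookkeeping needed to confirm that this limit really has the prescribed Fourier coefficients—that is, that coefficient extraction commutes with the $H^1$-limit—together with the initial verification that the coefficient of a genuine $H^1(\breve{\Omega})$-field has its distributional derivatives $\partial_r \vec{v}^k$ and $\partial_z \vec{v}^k$ lying in the weighted spaces, as opposed to merely having a finite formal norm. Both points, however, rest on the continuity of the coefficient projections already implicit in the passage to~\eqref{eq:H1vecnormcylcoord2Dsum}.
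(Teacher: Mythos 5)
Your proposal is correct and follows essentially the same route as the paper: the isometry~\eqref{eq:H1vec_isometry} is read off directly from the norm decomposition~\eqref{eq:H1vecnormcylcoord2Dsum} together with the definition~\eqref{eq:H1veck_norm}, and the explicit spaces and norms are a transcription of the case analysis for $k=0$, $|k|=1$, $|k|\geq 2$ already carried out in Section~\ref{subsec:weighted_Sobolev_spaces} and after~\eqref{eq:H1veck_norm}. The surjectivity argument you supply---single-mode fields lie in the range by~\eqref{eq:norm_single_mode}, finitely supported sequences are dense in the $l_2$-sum, and the range of an isometry defined on a complete space is closed---is a correct elaboration of a point the paper leaves implicit, and your closing caveats (that coefficient extraction commutes with $H^1$-limits, and that finiteness of the weighted norm genuinely identifies membership in $\boldsymbol{H}_{(k)}^1(\Omega)$) flag exactly the steps the paper also takes for granted.
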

\begin{remark}
Corresponding to~\eqref{eq:H1vec_isometry}, we restate~\eqref{eq:H1vecinnerproductcylcoord2Dsum}
in terms of the inner products $( \cdot, \cdot )_{\boldsymbol{H}_{(k)}^1(\Omega)}$
associated with the norms $\| \cdot \|_{\boldsymbol{H}_{(k)}^1(\Omega)}$:
\begin{align}
\label{eq:H1_inner_prod_decomposition}
( \vec{\breve{v}}, \vec{\breve{w}} )_{(H^1(\breve{\Omega}))^3}
= \sum_{k \in \mathbb{Z}} ( \vec{v}^k, \vec{w}^k )_{\boldsymbol{H}_{(k)}^1(\Omega)}.
\end{align}
\end{remark}
\begin{remark}
For comparison, in Appendix~\ref{app:sec:comparison} we
derive~\eqref{eq:H1vec0_norm},~\eqref{eq:H1vec1_norm},~\eqref{eq:H1vec2_norm}
by an alternative method used in~\cite{2020arXiv200407216C}.
\end{remark}
We now consider the subspace $\bigl(H^1_0(\breve{\Omega})\bigr)^3$ of $\bigl(H^1(\breve{\Omega})\bigr)^3$. The corresponding
subspaces of $\boldsymbol{H}_{(k)}^1(\Omega)$ are
\begin{align*}
\boldsymbol{H}_{(k) \scriptstyle{\diamond}}^1(\Omega) :=
\boldsymbol{H}_{(k)}^1(\Omega) \cap \bigl( H^1_{1 \scriptstyle{\diamond}}(\Omega) \bigr)^3,
\end{align*}
consisting of vector functions in $\boldsymbol{H}_{(k)}^1(\Omega)$ that vanish on the part $\Gamma = \partial \Omega \backslash \Gamma_0$
of the boundary that is not contained in the $z$-axis. Since we use the semi-norm $| \cdot |_{(H^1(\breve{\Omega}))^3}$ as a norm,
equivalent to $\| \cdot \|_{(H^1(\breve{\Omega}))^3}$,
on $\bigl(H^1_0(\breve{\Omega})\bigr)^3$,
we state the following representation of $| \cdot |_{(H^1(\breve{\Omega}))^3}$,
which follows immediately from~\eqref{eq:H1vecnormcylcoord2Dsum} by omitting the
terms corresponding to the $\bigl(L^2(\breve{\Omega})\bigr)^3$-norm:
\begin{equation}
\label{eq:H1vecseminormcylcoord2Dsum}
\begin{alignedat}{1}
| \vec{\breve{v}} |_{(H^1(\breve{\Omega}))^3}^2 & = \sum_{k \in \mathbb{Z}} | \vec{v}^k |_{\boldsymbol{H}_{(k)}^1(\Omega)}^2,
\end{alignedat}
\end{equation}
where we define
\begin{equation}
\label{eq:H1veck_seminorm}
\begin{alignedat}{1}
| \vec{v}^k |_{\boldsymbol{H}_{(k)}^1(\Omega)}^2 & : =
\int_{\Omega}
\Bigl(
  | \partial_r v_r^k |^2 + | \partial_z v_r^k |^2
      + \frac{1+k^2}{r^2} \, | v_r^k |^2 + \frac{2ik}{r^2} \, v_{\theta}^k \bar{v}_r^k \\
  & \qquad \quad + | \partial_r v_{\theta}^k |^2 + | \partial_z v_{\theta}^k |^2
      + \frac{1+k^2}{r^2} \, | v_{\theta}^k |^2 - \frac{2ik}{r^2} \, v_r^k \bar{v}_{\theta}^k \\
  & \qquad \quad + | \partial_r v_z^k |^2 + | \partial_z v_z^k |^2 + \frac{k^2}{r^2} \, | v_z^k |^2
\Bigr) \, r \, \text{d}r \, \text{d}z.
\end{alignedat}
\end{equation}
\begin{remark}
The relation corresponding to~\eqref{eq:H1vecseminormcylcoord2Dsum}
between the associated inner products $( \cdot, \cdot )_{(H^1_0(\breve{\Omega}))^3}$
and $( \cdot, \cdot )_{\boldsymbol{H}_{(k) \scriptstyle{\diamond}}^1(\Omega)}$ is:
\begin{align}
\label{eq:H10_inner_prod_decomposition}
( \vec{\breve{v}}, \vec{\breve{w}} )_{(H^1_0(\breve{\Omega}))^3}
= \sum_{k \in \mathbb{Z}} ( \vec{v}^k, \vec{w}^k )_{\boldsymbol{H}_{(k) \scriptstyle{\diamond}}^1(\Omega)}.
\end{align}
\end{remark}
For all $k \in \mathbb{Z}$, $| \cdot |_{\boldsymbol{H}_{(k)}^1(\Omega)}$ is a norm,
equivalent to $\| \cdot \|_{\boldsymbol{H}_{(k)}^1(\Omega)}$,
on $\boldsymbol{H}_{(k) \scriptstyle{\diamond}}^1(\Omega)$.
This follows by again considering a function $\vec{\breve{v}}^k \in \bigl(H^1_0(\breve{\Omega})\bigr)^3$
corresponding to a single Fourier coefficient $\vec{v}^k \in \boldsymbol{H}_{(k) \scriptstyle{\diamond}}^1(\Omega)$:
\begin{align*}
\vec{\breve{v}}^k(x, y, z) := \frac{1}{\sqrt{2 \pi}} \, \mathcal{R}_{\theta} \vec{v}^k(r, z) e^{ik \theta},
\end{align*}
and noting that, by~\eqref{eq:H1vecseminormcylcoord2Dsum},
the equivalence between
$| \cdot |_{(H^1(\breve{\Omega}))^3}$ and $\| \cdot \|_{(H^1(\breve{\Omega}))^3}$
on $\bigl(H^1_0(\breve{\Omega})\bigr)^3$,
and~\eqref{eq:H1vec_isometry},
\begin{align*}
| \vec{v}^k |_{\boldsymbol{H}_{(k)}^1(\Omega)} = | \vec{\breve{v}}^k |_{(H^1(\breve{\Omega}))^3} \geq
c \, \| \vec{\breve{v}}^k \|_{(H^1(\breve{\Omega}))^3} = c \, \| \vec{v}^k \|_{\boldsymbol{H}_{(k)}^1(\Omega)}.
\end{align*}
We get the following characterization of $\bigl(H^1_0(\breve{\Omega})\bigr)^3$:
\begin{corollary}
The mapping
\begin{align*}
\vec{\breve{v}} \mapsto (\vec{v}^k)_{k \in \mathbb{Z}},
\end{align*}
defined by~\eqref{eq:vec_v_cartesian_cyl} and \eqref{eq:fourier_v},
is an isometric isomorphism between $\bigl(H^1_0(\breve{\Omega})\bigr)^3$ and the $l_2$-sum
$\bigoplus_{2} \bigl\{ \boldsymbol{H}_{(k) \scriptstyle{\diamond}}^1(\Omega) \colon k \in \mathbb{Z} \bigr\}$:
\begin{align}
\label{eq:H10vec_isometry}
| \vec{\breve{v}} |_{(H^1(\breve{\Omega}))^3} =
  \Bigl( \sum_{k \in \mathbb{Z}} | \vec{v}^k |^2_{\boldsymbol{H}_{(k)}^1(\Omega)} \Bigr)^{\frac{1}{2}} =
  \| (\vec{v}^k) \|_{\bigoplus_{2} \{ \boldsymbol{H}_{(k) \scriptstyle{\diamond}}^1(\Omega) \colon k \in \mathbb{Z} \}},
\end{align}
where
\begin{align*}
\boldsymbol{H}_{(k) \scriptstyle{\diamond}}^1(\Omega) := \left\{
\begin{array}{lcl}
  V^1_{1 \scriptstyle{\diamond}}(\Omega) \times
  V^1_{1 \scriptstyle{\diamond}}(\Omega) \times
  H^1_{1 \scriptstyle{\diamond}}(\Omega), & \text{if} & k=0, \\
  \left\{ \vec{v}^k \in
     H^1_{1 \scriptstyle{\diamond}}(\Omega) \times
     H^1_{1 \scriptstyle{\diamond}}(\Omega) \times
     V^1_{1 \scriptstyle{\diamond}}(\Omega), \right. && \\
  \qquad \qquad \qquad \; \left. v_r^k + ik v_{\theta}^k \in L^2_{-1}(\Omega) \right\},
  & \text{if} & |k|=1, \\
 V^1_{1 \scriptstyle{\diamond}}(\Omega) \times
 V^1_{1 \scriptstyle{\diamond}}(\Omega) \times
 V^1_{1 \scriptstyle{\diamond}}(\Omega), & \text{if} & |k| \geq 2,
\end{array}
\right.
\end{align*}
and
\begin{align*}
| \vec{v}^k |_{\boldsymbol{H}_{(k)}^1(\Omega)}^2 := \left\{
\begin{array}{lcl}
  \| v_r^k \|_{V^1_1(\Omega)}^2 + \| v_{\theta}^k \|_{V^1_1(\Omega)}^2 + | v_z^k |_{H^1_1(\Omega)}^2, & \text{if} & k=0, \\ [2ex]
  | v_r^k |_{H^1_1(\Omega)}^2 + | v_{\theta}^k |_{H^1_1(\Omega)}^2 + \| v_z^k \|_{V^1_1(\Omega)}^2 & & \\ [1.5ex]
    \qquad + 2 \, \| v_r^k + ik v_{\theta}^k \|_{L^2_{-1}(\Omega)}^2, & \text{if} & |k|=1, \\ [2ex]
    | \vec{v}^k |_{(H^1_1(\Omega))^3}^2 & & \\ [1.5ex]
    \qquad + (1 + k^2) \left( \| v_r^k \|_{L^2_{-1}(\Omega)}^2 + \| v_{\theta}^k \|_{L^2_{-1}(\Omega)}^2 \right) & & \\ [1.5ex]
    \qquad + k^2 \, \| v_z^k \|_{L^2_{-1}(\Omega)}^2 & & \\ [1.5ex]
    \qquad + 2ik \int_{\Omega} \bigl( v_{\theta}^k \bar{v}_r^k - v_r^k \bar{v}_{\theta}^k \bigr) \, \frac{1}{r} \, \text{d}r \, \text{d}z,
    & \text{if} & |k| \geq 2.
\end{array}
\right.
\end{align*}
For $|k| \geq 2$, we can use the equivalent norm $| \cdot  |_{\boldsymbol{H}_{(k)*}^1(\Omega)}$ defined by:
\begin{align*}
| \vec{v}^k  |_{\boldsymbol{H}_{(k)*}^1(\Omega)}^2 & :=
| \vec{v}^k |_{(H^1_1(\Omega))^3}^2 + k^2 \| \vec{v}^k \|_{(L^2_{-1}(\Omega))^3}^2.
\end{align*}
\end{corollary}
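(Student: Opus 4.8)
The plan is to obtain the characterization of $\bigl(H^1_0(\breve{\Omega})\bigr)^3$ as a restriction of the isometric isomorphism for $\bigl(H^1(\breve{\Omega})\bigr)^3$ already established in Theorem~\ref{theorem:characth1}. Since $\bigl(H^1_0(\breve{\Omega})\bigr)^3$ is the closed subspace of $\bigl(H^1(\breve{\Omega})\bigr)^3$ consisting of those vector fields whose trace on $\partial\breve{\Omega}$ vanishes, and since the Fourier coefficient map $\vec{\breve{v}} \mapsto (\vec{v}^k)_{k \in \mathbb{Z}}$ is known to be an isometric isomorphism onto $\bigoplus_{2}\{\boldsymbol{H}_{(k)}^1(\Omega) \colon k \in \mathbb{Z}\}$, it suffices to (i) identify the image of the subspace $\bigl(H^1_0(\breve{\Omega})\bigr)^3$ under this map, and (ii) replace the full norm by the equivalent seminorm on both sides.

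First I would show that the image of $\bigl(H^1_0(\breve{\Omega})\bigr)^3$ is exactly $\bigoplus_{2}\{\boldsymbol{H}_{(k) \scriptstyle{\diamond}}^1(\Omega) \colon k \in \mathbb{Z}\}$. The defining condition $\vec{\breve{v}} = 0$ on $\partial\breve{\Omega}$ translates, after passing to cylindrical coordinates, into $\vec{v} = 0$ on $\tilde{\Gamma}$, because $\vec{\breve{v}} = \mathcal{R}_{\theta} \vec{v}$ and $\mathcal{R}_{\theta}$ is orthogonal (hence pointwise invertible) for every $\theta$, so the Cartesian components vanish on $\tilde{\Gamma}$ precisely when the cylindrical components do. Since $\tilde{\Gamma} = \Gamma \times (-\pi, \pi]$ and the trace operator commutes with the Fourier expansion in $\theta$, the condition $\vec{v} = 0$ on $\tilde{\Gamma}$ is equivalent to $\vec{v}^k = 0$ on $\Gamma$ for every $k$, that is, to $\vec{v}^k \in \bigl(H^1_{1 \scriptstyle{\diamond}}(\Omega)\bigr)^3$ for every $k$. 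Intersecting this with the membership $\vec{v}^k \in \boldsymbol{H}_{(k)}^1(\Omega)$ guaranteed by Theorem~\ref{theorem:characth1} gives exactly $\vec{v}^k \in \boldsymbol{H}_{(k) \scriptstyle{\diamond}}^1(\Omega)$, and the explicit component-wise descriptions of the spaces then follow by intersecting the three cases of Theorem~\ref{theorem:characth1} with the $\diamond$-condition, using that $V^1_{1 \scriptstyle{\diamond}}(\Omega) = V^1_1(\Omega) \cap \{v = 0 \text{ on } \Gamma\}$ and likewise for $H^1_{1 \scriptstyle{\diamond}}(\Omega)$.

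Next I would establish the isometry~\eqref{eq:H10vec_isometry}. On $\bigl(H^1_0(\breve{\Omega})\bigr)^3$ the seminorm $|\cdot|_{(H^1(\breve{\Omega}))^3}$ is used as the (equivalent) norm, and the decomposition~\eqref{eq:H1vecseminormcylcoord2Dsum}, proved just above the corollary, reads $|\vec{\breve{v}}|_{(H^1(\breve{\Omega}))^3}^2 = \sum_{k \in \mathbb{Z}} |\vec{v}^k|_{\boldsymbol{H}_{(k)}^1(\Omega)}^2$. Since $|\cdot|_{\boldsymbol{H}_{(k)}^1(\Omega)}$ has already been shown to be a norm on $\boldsymbol{H}_{(k) \scriptstyle{\diamond}}^1(\Omega)$ equivalent to $\|\cdot\|_{\boldsymbol{H}_{(k)}^1(\Omega)}$, this decomposition is precisely the asserted identity, and it shows the restricted map is norm-preserving onto $\bigoplus_{2}\{\boldsymbol{H}_{(k) \scriptstyle{\diamond}}^1(\Omega) \colon k \in \mathbb{Z}\}$. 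The explicit form of each seminorm $|\vec{v}^k|_{\boldsymbol{H}_{(k)}^1(\Omega)}^2$ is obtained from~\eqref{eq:H1veck_seminorm} by the same elementary manipulations as in Theorem~\ref{theorem:characth1}, namely the completion of the square $|v_r^k \pm i v_{\theta}^k|^2$ for $|k|=1$ and the bound on the coupling term for $|k| \geq 2$, carried out after simply dropping the $\bigl(L^2_1(\Omega)\bigr)^3$ terms.

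The main obstacle is step (i): justifying rigorously that the three-dimensional trace condition decomposes Fourier-wise into the two-dimensional conditions $\vec{v}^k = 0$ on $\Gamma$. The delicate point is the behaviour of traces in the weighted spaces near the points where $\Gamma$ meets the axis part $\Gamma_0$, where one must verify that the trace operator on $H^1_1(\Omega)$ is compatible with the three-dimensional trace under the Fourier expansion, including the commutation of the trace with the infinite sum. Here the fact, cited after the definition of $V^1_1(\Omega)$, that functions in $V^1_1(\Omega)$ automatically have null trace on $\Gamma_0$ is what allows us to impose a condition only on $\Gamma$, so that the subspaces $\boldsymbol{H}_{(k) \scriptstyle{\diamond}}^1(\Omega)$ are indeed the correct images and the surjectivity of the restricted map, inherited from Theorem~\ref{theorem:characth1}, is preserved.
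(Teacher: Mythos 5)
Your proposal is correct and follows essentially the same route as the paper: restrict the isometric isomorphism of Theorem~\ref{theorem:characth1} to the subspace, use the seminorm decomposition~\eqref{eq:H1vecseminormcylcoord2Dsum} in place of the full-norm identity, and invoke the equivalence of $| \cdot |_{\boldsymbol{H}_{(k)}^1(\Omega)}$ and $\| \cdot \|_{\boldsymbol{H}_{(k)}^1(\Omega)}$ on $\boldsymbol{H}_{(k) \scriptstyle{\diamond}}^1(\Omega)$ established via the single-Fourier-mode construction. Your step (i), making explicit that the vanishing of $\vec{\breve{v}}$ on $\partial\breve{\Omega}$ decomposes Fourier-wise into $\vec{v}^k = 0$ on $\Gamma$ (with no extra condition on $\Gamma_0$, the axis part being interior to $\breve{\Omega}$), is a legitimate elaboration of a point the paper treats implicitly through its definition of $\boldsymbol{H}_{(k) \scriptstyle{\diamond}}^1(\Omega)$.
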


\subsection{%
Characterization of
\texorpdfstring{$\bigl(H^{-1}(\breve{\Omega})\bigr)^3$}{%
\ifpdfstringunicode{(H\unichar{"207B}\unichar{"00B9}(\unichar{"03A9}))\unichar{"00B3}}{(H-1(Omega))3}}%
}
For $\vec{\breve{f}} \in \bigl(H^{-1}(\breve{\Omega})\bigr)^3$, from the Riesz representation theorem
we deduce the existence of (a unique) $\vec{\breve{w}}_f \in \bigl(H^1_0(\breve{\Omega})\bigr)^3$,
with $\| \vec{\breve{f}} \|_{(H^{-1}(\breve{\Omega}))^3} = | \vec{\breve{w}}_f |_{(H^1(\breve{\Omega}))^3}$,
such that for all $\vec{\breve{v}} \in \bigl(H^1_0(\breve{\Omega})\bigr)^3$:
\begin{equation}
\label{eq:Hminus1fourier}
\begin{alignedat}{1}
\langle \vec{\breve{f}}, \vec{\breve{v}} \rangle_{(H^{-1}(\breve{\Omega}))^3 \times (H^1_0(\breve{\Omega}))^3}
& = ( \vec{\breve{w}}_f, \vec{\breve{v}} )_{(H^1_0(\breve{\Omega}))^3} \\
& = \sum_{k \in \mathbb{Z}} ( \vec{w}^k_f, \vec{v}^k )_{\boldsymbol{H}_{(k) \scriptstyle{\diamond}}^1(\Omega)} \\
& := \sum_{k \in \mathbb{Z}}
\langle \vec{f}^k, \vec{v}^k \rangle_{\boldsymbol{H}_{(k)}^{-1}(\Omega) \times \boldsymbol{H}_{(k) \scriptstyle{\diamond}}^1(\Omega)},
\end{alignedat}
\end{equation}
where we have used~\eqref{eq:H10_inner_prod_decomposition}, and
$\boldsymbol{H}_{(k)}^{-1}(\Omega)$ denotes the dual space of $\boldsymbol{H}_{(k) \scriptstyle{\diamond}}^1(\Omega)$.
\begin{remark}
For $\vec{\breve{f}} \in \bigl(L^{2}(\breve{\Omega})\bigr)^3$ we have:
\begin{align*}
\vec{f}^k(r, z) = \frac{1}{\sqrt{2 \pi}} \, \int_{-\pi}^{\pi} \bigl( \mathcal{R}_{-\theta} \vec{\breve{f}} \bigr) (r, \theta, z) e^{-ik \theta} \, \emph{d} \theta,
\end{align*}
and we can write the duality pairings as integrals:
\begin{align}
\langle \vec{\breve{f}}, \vec{\breve{v}} \rangle_{(H^{-1}(\breve{\Omega}))^3 \times (H^1_0(\breve{\Omega}))^3} & =
\int_{\breve{\Omega}}
\bigl( \vec{\breve{f}} \cdot \vec{\bar{\breve{v}}}
\bigr) \, \emph{d}x \, \emph{d}y \, \emph{d}z, \nonumber \\
\langle \vec{f}^k, \vec{v}^k \rangle_{\boldsymbol{H}_{(k)}^{-1}(\Omega) \times \boldsymbol{H}_{(k) \scriptstyle{\diamond}}^1(\Omega)} & =
\int_{\Omega}
\bigl( \vec{f}^k \cdot \vec{\bar{v}}^k
\bigr) \, r \, \emph{d}r \, \emph{d}z. \label{eq:duality_pairing_as_integral}
\end{align}
\end{remark}
From~\eqref{eq:H10vec_isometry} and the definition~\eqref{eq:Hminus1fourier} of $\vec{f}^k$ we get:
\begin{align*}
\| \vec{\breve{f}} \|^2_{(H^{-1}(\breve{\Omega}))^3} = | \vec{\breve{w}}_f |_{(H^1(\breve{\Omega}))^3}^2
= \sum_{k \in \mathbb{Z}} | \vec{w}^k_f |_{\boldsymbol{H}_{(k)}^1(\Omega)}^2
= \sum_{k \in \mathbb{Z}} \| \vec{f}^k \|_{\boldsymbol{H}_{(k)}^{-1}(\Omega)}^2,
\end{align*}
which gives the following characterization of $\bigl(H^{-1}(\breve{\Omega})\bigr)^3$:
\begin{theorem}
\label{theorem:characthm1}
The mapping
\begin{align*}
\vec{\breve{f}} \mapsto (\vec{f}^k)_{k \in \mathbb{Z}},
\end{align*}
defined by~\eqref{eq:Hminus1fourier}, is an isometric isomorphism between $\bigl(H^{-1}(\breve{\Omega})\bigr)^3$ and the $l_2$-sum
$\bigoplus_{2} \bigl\{ \boldsymbol{H}_{(k)}^{-1}(\Omega) \colon k \in \mathbb{Z} \bigr\}$:
\begin{align}
\label{eq:Hminus1vec_isometry}
\| \vec{\breve{f}} \|_{(H^{-1}(\breve{\Omega}))^3} =
  \Bigl( \sum_{k \in \mathbb{Z}} \| \vec{f}^k \|^2_{\boldsymbol{H}_{(k)}^{-1}(\Omega)} \Bigr)^{\frac{1}{2}} =
  \| (\vec{f}^k) \|_{\bigoplus_{2} \{ \boldsymbol{H}_{(k)}^{-1}(\Omega) \colon k \in \mathbb{Z} \}}.
\end{align}
\end{theorem}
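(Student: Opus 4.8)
\emph{Proof proposal.} The plan is to present the map $\vec{\breve{f}} \mapsto (\vec{f}^k)$ as a composition of isometric isomorphisms and to read off both the isometry~\eqref{eq:Hminus1vec_isometry} and the bijectivity from that factorization. First I would observe that, by its defining relation in~\eqref{eq:Hminus1fourier}, the coefficient $\vec{f}^k$ is exactly the Riesz representative in $\boldsymbol{H}_{(k)}^{-1}(\Omega)$ of $\vec{w}^k_f \in \boldsymbol{H}_{(k) \scriptstyle{\diamond}}^1(\Omega)$: the functional $\vec{v}^k \mapsto ( \vec{w}^k_f, \vec{v}^k )_{\boldsymbol{H}_{(k) \scriptstyle{\diamond}}^1(\Omega)}$ is bounded and anti-linear by Cauchy--Schwarz, hence lies in $\boldsymbol{H}_{(k)}^{-1}(\Omega)$, and the Riesz isometry gives $\| \vec{f}^k \|_{\boldsymbol{H}_{(k)}^{-1}(\Omega)} = | \vec{w}^k_f |_{\boldsymbol{H}_{(k)}^1(\Omega)}$ for each $k$. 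Thus the map in question factors as the Fourier-coefficient isomorphism of the preceding corollary (from $\bigl(H^1_0(\breve{\Omega})\bigr)^3$ onto $\bigoplus_{2} \bigl\{ \boldsymbol{H}_{(k) \scriptstyle{\diamond}}^1(\Omega) \colon k \in \mathbb{Z} \bigr\}$), preceded by the inverse Riesz map of $\bigl(H^1_0(\breve{\Omega})\bigr)^3$ and followed by the mode-wise Riesz maps.

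I would then check that every factor is an isometric isomorphism. For the three-dimensional and the mode-wise Riesz maps this is the Riesz representation theorem; because the excerpt's convention makes duals consist of anti-linear functionals, each Riesz map is \emph{linear}, so the composite is linear. That the Fourier-coefficient map is an isometric isomorphism is exactly the corollary, namely~\eqref{eq:H10vec_isometry}. Finally the diagonal assembly of the mode-wise maps is an isometric isomorphism of $\bigoplus_{2} \bigl\{ \boldsymbol{H}_{(k) \scriptstyle{\diamond}}^1(\Omega) \colon k \in \mathbb{Z} \bigr\}$ onto $\bigoplus_{2} \bigl\{ \boldsymbol{H}_{(k)}^{-1}(\Omega) \colon k \in \mathbb{Z} \bigr\}$: norm preservation is immediate from the per-mode isometries, and a sequence lies in the target precisely when its mode-wise Riesz preimage is square-summable. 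Chaining the three isometries reproduces the identity~\eqref{eq:Hminus1vec_isometry} already displayed before the statement, whence injectivity is automatic.

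For surjectivity I would run the factorization backwards: given $(\vec{f}^k)$ with $\sum_{k \in \mathbb{Z}} \| \vec{f}^k \|^2_{\boldsymbol{H}_{(k)}^{-1}(\Omega)} < +\infty$, use Riesz mode by mode to obtain $\vec{w}^k_f$ with matching norms, apply the corollary to assemble a unique $\vec{\breve{w}}_f \in \bigl(H^1_0(\breve{\Omega})\bigr)^3$, and set $\vec{\breve{f}} := ( \vec{\breve{w}}_f, \cdot )_{(H^1_0(\breve{\Omega}))^3}$; the inner-product decomposition~\eqref{eq:H10_inner_prod_decomposition} then shows that the coefficients of this $\vec{\breve{f}}$ are the prescribed $\vec{f}^k$. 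The argument is in essence bookkeeping once the dual spaces $\boldsymbol{H}_{(k)}^{-1}(\Omega)$ are in place, and the only point needing care is this last step: one must ensure that the reconstructed $\vec{\breve{f}}$ is continuous on all of $\bigl(H^1_0(\breve{\Omega})\bigr)^3$ and not merely on finite Fourier sums, which is guaranteed because~\eqref{eq:H10_inner_prod_decomposition} holds as an equality of series convergent by the $l_2$-summability transported through the Riesz isometries and the corollary.
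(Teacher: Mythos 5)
Your proposal is correct and follows essentially the same route as the paper: the paper likewise defines $\vec{f}^k$ via the three-dimensional Riesz representative $\vec{\breve{w}}_f$, the decomposition~\eqref{eq:H10_inner_prod_decomposition} of the $\bigl(H^1_0(\breve{\Omega})\bigr)^3$-inner product, and the mode-wise Riesz maps, then chains the isometries $\| \vec{\breve{f}} \|^2_{(H^{-1}(\breve{\Omega}))^3} = | \vec{\breve{w}}_f |^2_{(H^1(\breve{\Omega}))^3} = \sum_{k} | \vec{w}^k_f |^2_{\boldsymbol{H}_{(k)}^1(\Omega)} = \sum_{k} \| \vec{f}^k \|^2_{\boldsymbol{H}_{(k)}^{-1}(\Omega)}$ exactly as you do. Your explicit treatment of surjectivity and of the continuity of the reconstructed functional merely spells out steps the paper leaves implicit in the bijectivity of each factor.
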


\section{Two-dimensional problems}
\label{sec:two-dimensional_problems}
In this section we state variational formulations of the two-dimensional problems for the Fourier coefficients,
which are set in the spaces from the previous section. We show existence, uniqueness and stability from inf-sup conditions.
We conclude by discussing some features of the special case with axisymmetric data.

\subsection{Variational formulation}
\label{subsec:variational_formulation}
Introducing, for all $k \in \mathbb{Z}$, the sesquilinear forms
\begin{align*}
\mathcal{A}_k(\vec{u}, \vec{v}) := & (\vec{u}, \vec{v} )_{\boldsymbol{H}_{(k) \scriptstyle{\diamond}}^1(\Omega)} \\
= & \int_{\Omega}
\Bigl(
  \partial_r u_r \partial_r \bar{v}_r + \partial_z u_r \partial_z \bar{v}_r
      + \frac{1+k^2}{r^2} \, u_r \bar{v}_r + \frac{2ik}{r^2} \, u_{\theta} \bar{v}_r \\
  & \qquad + \partial_r u_{\theta} \partial_r \bar{v}_{\theta} + \partial_z u_{\theta} \partial_z \bar{v}_{\theta}
      + \frac{1+k^2}{r^2} \, u_{\theta} \bar{v}_{\theta} - \frac{2ik}{r^2} \, u_r \bar{v}_{\theta} \\
  & \qquad + \partial_r u_z \partial_r \bar{v}_z + \partial_z u_z \partial_z \bar{v}_z + \frac{k^2}{r^2} \, u_z \bar{v}_z
\Bigr) \, r \, \text{d}r \, \text{d}z, \\
\mathcal{B}_k(\vec{v}, q) := & - \int_{\Omega} ( \text{div}_k \, \vec{v} ) \bar{q} r \, \text{d}r \, \text{d}z \\
= & - \int_{\Omega} \bigl( \partial_r ( r v_r ) + ik v_{\theta} + \partial_z (r v_z )
\bigr) \bar{q} \, \text{d}r \, \text{d}z,
\end{align*}
where $\text{div}_k$ is defined by~\eqref{eq:divk},
we consider the following variational formulation of~\eqref{eq:stokes2dproblems}:

Find $( \vec{u}^k, p^k ) \in \boldsymbol{H}_{(k)}^1(\Omega) \times L_{(k),0}^2(\Omega)$,
where $\vec{u}^k - \vec{g}^k \in \boldsymbol{H}_{(k) \scriptstyle{\diamond}}^1(\Omega)$, such that
for all $(\vec{v} , q) \in \boldsymbol{H}_{(k) \scriptstyle{\diamond}}^1(\Omega) \times L_{(k),0}^2(\Omega)$
\begin{equation}
\label{eq:stokes2dvariational}
\left\{
\begin{alignedat}{1}
\mathcal{A}_k(\vec{u}^k, \vec{v}) + \overline{\mathcal{B}_k(\vec{v}, p^k)} & =
\langle \vec{f}^k, \vec{v} \rangle_{\boldsymbol{H}_{(k)}^{-1}(\Omega) \times \boldsymbol{H}_{(k) \scriptstyle{\diamond}}^1(\Omega)}, \\
\mathcal{B}_k(\vec{u}^k, q) & = 0.
\end{alignedat}
\right.
\end{equation}
\begin{remark}
Note that the term $\mathcal{B}_k$ is conjugated in the first equation in~\eqref{eq:stokes2dvariational},
making the sesquilinear form
\begin{align*}
\mathcal{M}_k \! \left((\vec{u}, p),(\vec{v}, q)\right) :=
\mathcal{A}_k(\vec{u}, \vec{v}) + \overline{\mathcal{B}_k(\vec{v}, p)} + \mathcal{B}_k(\vec{u}, q)
\end{align*}
Hermitian.
\end{remark}
\begin{remark}
We use the same notation $\vec{g}^k$ for
the Fourier coefficients of the Dirichlet boundary data $\vec{\breve{g}}$,
and their liftings in $\boldsymbol{H}_{(k)}^1(\Omega)$.
Existence of these liftings follows from the three-dimensional trace theorem since
$\vec{\breve{g}} \in \bigl(H^{\frac{1}{2}}(\partial \breve{\Omega})\bigr)^3$
and thus, again using the same notation, admits a lifting $\vec{\breve{g}} \in \bigl(H^1(\breve{\Omega})\bigr)^3$.
\end{remark}
\begin{remark}
For $\vec{\breve{f}} \in \bigl(L^{2}(\breve{\Omega})\bigr)^3$ we can, as noted in~\eqref{eq:duality_pairing_as_integral}, write the duality pairings as integrals:
\begin{align*}
\langle \vec{f}^k, \vec{v} \rangle_{\boldsymbol{H}_{(k)}^{-1}(\Omega) \times \boldsymbol{H}_{(k) \scriptstyle{\diamond}}^1(\Omega)} & =
\int_{\Omega}
\bigl( \vec{f}^k \cdot \vec{\bar{v}}
\bigr) \, r \, \emph{d}r \, \emph{d}z.
\end{align*}
\end{remark}
\begin{remark}
The sesquilinear forms $\mathcal{A}_k(\cdot, \cdot)$ and $\mathcal{B}_k(\cdot, \cdot)$
are related to the corresponding sesquilinear forms
\begin{align*}
\breve{a}(\vec{\breve{u}}, \vec{\breve{v}}) &
= ( \vec{\breve{u}}, \vec{\breve{v}} )_{(H^1_0(\breve{\Omega}))^3}
= \int_{\breve{\Omega}} \gradtensorremark \vec{\breve{u}} : \gradtensorremark \vec{\bar{\breve{v}}} \; \emph{d}x \, \emph{d}y \, \emph{d}z, \\
\breve{b}(\vec{\breve{v}}, \breve{q}) & =  - \int_{\breve{\Omega}} ( \divremark \vec{\breve{v}} ) \bar{\breve{q}} \, \emph{d}x \, \emph{d}y \, \emph{d}z,
\end{align*}
in the mixed formulation of the three-dimensional problem~\eqref{eq:stokesproblem}:

Find $( \vec{\breve{u}}, \breve{p} ) \in \bigl(H^1(\breve{\Omega})\bigr)^3 \times L^2_0(\breve{\Omega})$,
where $\vec{\breve{u}} - \vec{\breve{g}} \in \bigl(H^1_0(\breve{\Omega})\bigr)^3$, such that
for all $(\vec{\breve{v}} , \breve{q}) \in \bigl(H^1_0(\breve{\Omega})\bigr)^3 \times L^2_0(\breve{\Omega})$
\begin{equation*}
\left\{
\begin{alignedat}{1}
\breve{a}(\vec{\breve{u}}, \vec{\breve{v}}) +
\overline{\breve{b}(\vec{\breve{v}}, \breve{p})} & =
\langle \vec{\breve{f}}, \vec{\breve{v}} \rangle_{(H^{-1}(\breve{\Omega}))^3 \times (H^1_0(\breve{\Omega}))^3}, \\
\breve{b}(\vec{\breve{u}}, \breve{q}) & = 0,
\end{alignedat}
\right.
\end{equation*}
by the relations
\begin{align}
\label{eq:sesquiA2d3d}
\breve{a}(\vec{\breve{u}}, \vec{\breve{v}}) & = \sum_{k \in \mathbb{Z}} \mathcal{A}_k(\vec{u}^k, \vec{v}^k), \\
\label{eq:sesquiB2d3d}
\breve{b}(\vec{\breve{v}}, \breve{q}) & = \sum_{k \in \mathbb{Z}} \mathcal{B}_k(\vec{v}^k, q^k),
\end{align}
where~\eqref{eq:sesquiA2d3d} is a restatement of~\eqref{eq:H10_inner_prod_decomposition}, and~\eqref{eq:sesquiB2d3d}
is an analogous consequence of~\eqref{eq:divvcyl} and the orthogonality on $[-\pi, \pi]$
of the family $\{ e^{ik \theta} \}_{k = -\infty}^{+\infty}$ of basis functions (cf$.$~\eqref{eq:parseval1}).
\end{remark}

\subsection{Well-posedness}
\label{sec:existence_and_uniqueness}
Setting $\vec{u}^k = \vec{u}^k_0 + \vec{g}^k$, we rewrite~\eqref{eq:stokes2dvariational}:

Find $( \vec{u}^k_0, p^k ) \in \boldsymbol{H}_{(k) \scriptstyle{\diamond}}^1(\Omega) \times L_{(k),0}^2(\Omega)$ such that
for all $(\vec{v} , q) \in \boldsymbol{H}_{(k) \scriptstyle{\diamond}}^1(\Omega) \times L_{(k),0}^2(\Omega)$
\begin{equation*}
\left\{
\begin{alignedat}{1}
\mathcal{A}_k(\vec{u}^k_0, \vec{v}) + \overline{\mathcal{B}_k(\vec{v}, p^k)} & =
\langle \vec{f}^k, \vec{v} \rangle_{\boldsymbol{H}_{(k)}^{-1}(\Omega) \times \boldsymbol{H}_{(k) \scriptstyle{\diamond}}^1(\Omega)} -
\mathcal{A}_k(\vec{g}^k, \vec{v}), \\
\mathcal{B}_k(\vec{u}^k_0, q) & =
-\mathcal{B}_k(\vec{g}^k, q).
\end{alignedat}
\right.
\end{equation*}
Since $\mathcal{A}_k(\cdot, \cdot)$ is a nonnegative, Hermitian form,
where $\mathcal{A}_k(\vec{v}, \vec{v}) = | \vec{v} |_{\boldsymbol{H}_{(k)}^1(\Omega)}^2$,
we have $| \mathcal{A}_k(\vec{u}, \vec{v}) | \leq
\mathcal{A}_k(\vec{u}, \vec{u})^{\frac{1}{2}}
\mathcal{A}_k(\vec{v}, \vec{v})^{\frac{1}{2}} =
| \vec{u} |_{\boldsymbol{H}_{(k)}^1(\Omega)}
| \vec{v} |_{\boldsymbol{H}_{(k)}^1(\Omega)}$,
showing coercivity of $\mathcal{A}_k(\cdot, \cdot)$ on
$\boldsymbol{H}_{(k) \scriptstyle{\diamond}}^1(\Omega)$,
continuity of $\mathcal{A}_k(\cdot, \cdot)$ on
$\boldsymbol{H}_{(k) \scriptstyle{\diamond}}^1(\Omega) \times \boldsymbol{H}_{(k) \scriptstyle{\diamond}}^1(\Omega)$,
and $\mathcal{A}_k(\vec{g}^k, \cdot) \in \boldsymbol{H}_{(k)}^{-1}(\Omega)$ with
$\| \mathcal{A}_k(\vec{g}^k, \cdot) \|_{\boldsymbol{H}_{(k)}^{-1}(\Omega)} \leq
| \vec{g}^k |_{\boldsymbol{H}_{(k)}^1(\Omega)}$.

Again considering functions corresponding to a single Fourier coefficient
\begin{align}
\vec{\breve{v}}^k(x, y, z) & = \frac{1}{\sqrt{2 \pi}} \, \mathcal{R}_{\theta} \vec{v}^k(r, z) e^{ik \theta}, \nonumber \\
\breve{q}^k(x, y, z) & = \frac{1}{\sqrt{2 \pi}} \, q^k(r, z) e^{ik \theta}, \label{eq:single_mode_q}
\end{align}
we note from~\eqref{eq:sesquiB2d3d},
the continuity of the three-dimensional form $\breve{b}(\cdot, \cdot)$,~\eqref{eq:H1vecseminormcylcoord2Dsum},
and~\eqref{eq:l2_isometry} that
\begin{align*}
| \mathcal{B}_k(\vec{v}^k, q^k) | = |\breve{b}(\vec{\breve{v}}^k, \breve{q}^k)| & \leq
    \sqrt{3} \; | \vec{\breve{v}}^k |_{(H^1(\breve{\Omega}))^3} \| \breve{q}^k \|_{L^2(\breve{\Omega})} \\
& = \sqrt{3} \; | \vec{v}^k |_{\boldsymbol{H}_{(k)}^1(\Omega)} \| q^k \|_{L_1^2(\Omega)},
\end{align*}
which shows continuity of $\mathcal{B}_k(\cdot, \cdot)$ on
$\boldsymbol{H}_{(k) \scriptstyle{\diamond}}^1(\Omega) \times L_{(k),0}^2(\Omega)$, and
$\mathcal{B}_k(\vec{g}^k, \cdot) \in L_{(k),0}^2(\Omega)^*$
with $\| \mathcal{B}_k(\vec{g}^k, \cdot) \|_{L_{(k),0}^2(\Omega)^*} \leq
\sqrt{3} \; | \vec{g}^k |_{\boldsymbol{H}_{(k)}^1(\Omega)}$.

The inf-sup condition for $\mathcal{B}_k(\cdot, \cdot)$ on
$\boldsymbol{H}_{(k) \scriptstyle{\diamond}}^1(\Omega) \times L_{(k),0}^2(\Omega)$: There exists a positive
constant $\beta$ (independent of $k$) such that
\begin{align*}
\forall q^k \in L_{(k),0}^2(\Omega): \sup_{\vec{v}^k \in \boldsymbol{H}_{(k) \scriptstyle{\diamond}}^1(\Omega)}
\frac{|\mathcal{B}_k(\vec{v}^k, q^k)|}{| \vec{v}^k |_{\boldsymbol{H}_{(k)}^1(\Omega)}} \geq \beta \| q^k \|_{L^2_1(\Omega)},
\end{align*}
follows from the three-dimensional inf-sup condition, from which,
for an arbitrary $q^k \in L_{(k),0}^2(\Omega)$ and corresponding $\breve{q}^k \in L^2_0 (\breve{\Omega})$ given by~\eqref{eq:single_mode_q},
we deduce that
$\exists \vec{\breve{w}} \in \bigl(H^1_0(\breve{\Omega})\bigr)^3$
such that, for its $k$:th Fourier coefficient $\vec{w}^k \in \boldsymbol{H}_{(k) \scriptstyle{\diamond}}^1(\Omega)$:
\begin{align*}
\frac{|\mathcal{B}_k(\vec{w}^k, q^k)|}{| \vec{w}^k |_{\boldsymbol{H}_{(k)}^1(\Omega)}} \geq
\frac{| \mathcal{B}_k(\vec{w}^k, q^k) |}{| \vec{\breve{w}} |_{(H^1(\breve{\Omega}))^3}} =
\frac{| \breve{b}(\vec{\breve{w}}, \breve{q}^k )|}{| \vec{\breve{w}} |_{(H^1(\breve{\Omega}))^3}}
\geq \beta \| \breve{q}^k \|_{L^2(\breve{\Omega})}
= \beta \| q^k \|_{L^2_1{\Omega})}.
\end{align*}
We have thus proved (see, e.g.,~\cite[Theorem 4.2.3]{MR3097958}) the following
\begin{theorem}
For $\vec{\breve{f}} \in \bigl(H^{-1}(\breve{\Omega})\bigr)^3$ and $\vec{\breve{g}} \in \bigl(H^{\frac{1}{2}}(\partial \breve{\Omega})\bigr)^3$,
where $\vec{g}^0$ satisfies the compatibility condition~\eqref{eq:zeroflux2d},
the variational formulations~\eqref{eq:stokes2dvariational}
have unique solutions for all $k \in \mathbb{Z}$. The solutions are bounded, uniformly in $k$, by:
\begin{align}
\label{eq:regularitystokes2D}
\| \vec{u}^k \|_{\boldsymbol{H}_{(k)}^1(\Omega)} + \|p^k\|_{L^2_1 (\Omega)} \leq
C ( \| \vec{f}^k \|_{\boldsymbol{H}_{(k)}^{-1}(\Omega)} + \| \vec{g}^k \|_{\boldsymbol{H}_{(k)}^{1}(\Omega)} ).
\end{align}
\end{theorem}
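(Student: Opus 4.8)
The plan is to recognize that, after the homogenization $\vec{u}^k = \vec{u}^k_0 + \vec{g}^k$ carried out above, the formulation \eqref{eq:stokes2dvariational} is an abstract saddle-point problem posed on $\boldsymbol{H}_{(k) \scriptstyle{\diamond}}^1(\Omega) \times L_{(k),0}^2(\Omega)$, and to invoke the standard saddle-point (Brezzi) theory, as in \cite[Theorem 4.2.3]{MR3097958}, whose hypotheses have already been assembled in the paragraphs preceding the statement. The only genuinely delicate point is to confirm that every constant entering the final bound is independent of $k$.

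First I would collect the four ingredients that the abstract theorem requires, all verified above with $k$-independent constants: continuity of $\mathcal{A}_k$ and of $\mathcal{B}_k$ (constants $1$ and $\sqrt{3}$); coercivity of $\mathcal{A}_k$ on $\boldsymbol{H}_{(k) \scriptstyle{\diamond}}^1(\Omega)$, immediate from $\mathcal{A}_k(\vec{v},\vec{v}) = | \vec{v} |_{\boldsymbol{H}_{(k)}^1(\Omega)}^2$ together with the fact that $| \cdot |_{\boldsymbol{H}_{(k)}^1(\Omega)}$ is a norm on this subspace, equivalent to $\| \cdot \|_{\boldsymbol{H}_{(k)}^1(\Omega)}$ with $k$-independent constants; and the inf-sup condition for $\mathcal{B}_k$ with the constant $\beta$ transferred from the three-dimensional inf-sup condition. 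The lifting $\vec{g}^k \in \boldsymbol{H}_{(k)}^1(\Omega)$ used in the homogenization exists by the three-dimensional trace theorem together with the isometric isomorphism of Theorem~\ref{theorem:characth1}, while the hypothesis that $\vec{g}^0$ satisfies \eqref{eq:zeroflux2d} is the $k=0$ reduction of the three-dimensional zero-flux condition \eqref{eq:zeroflux3d}, consistent with the zero-mean pressure space $L^2_{1,0}(\Omega)$ appearing for $k=0$. Applying the abstract theorem then yields a unique pair $(\vec{u}^k_0, p^k)$, and hence a unique $(\vec{u}^k, p^k)$.

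It remains to derive the stability bound \eqref{eq:regularitystokes2D} and to confirm its uniformity in $k$, which I regard as the main point. The abstract a priori estimate bounds $| \vec{u}^k_0 |_{\boldsymbol{H}_{(k)}^1(\Omega)} + \| p^k \|_{L^2_1(\Omega)}$ by a constant, depending only on the coercivity, continuity and inf-sup constants and therefore independent of $k$, times the norms of the two functionals on the right-hand side. Using the estimates $\| \mathcal{A}_k(\vec{g}^k, \cdot) \|_{\boldsymbol{H}_{(k)}^{-1}(\Omega)} \leq | \vec{g}^k |_{\boldsymbol{H}_{(k)}^1(\Omega)}$ and $\| \mathcal{B}_k(\vec{g}^k, \cdot) \|_{L_{(k),0}^2(\Omega)^*} \leq \sqrt{3}\,| \vec{g}^k |_{\boldsymbol{H}_{(k)}^1(\Omega)}$ established above, these functionals are controlled by $\| \vec{f}^k \|_{\boldsymbol{H}_{(k)}^{-1}(\Omega)} + \| \vec{g}^k \|_{\boldsymbol{H}_{(k)}^1(\Omega)}$; undoing the homogenization through $\vec{u}^k = \vec{u}^k_0 + \vec{g}^k$ and passing from the seminorm to the full norm via the $k$-independent equivalence then produces \eqref{eq:regularitystokes2D}. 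The main obstacle is thus not any single hard estimate but the bookkeeping: at each transfer from the three-dimensional forms --- trace theorem, inf-sup condition, and seminorm/norm equivalence --- one must check that the constants produced are genuinely free of $k$, which is exactly what the single-mode constructions $\vec{\breve{v}}^k$, $\breve{q}^k$ and the isometry \eqref{eq:H1vec_isometry} were arranged to guarantee.
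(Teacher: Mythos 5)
Your proposal is correct and takes essentially the same approach as the paper: after the homogenization $\vec{u}^k = \vec{u}^k_0 + \vec{g}^k$, the paper likewise assembles the $k$-independent continuity, coercivity, and inf-sup hypotheses (the inf-sup constant $\beta$ transferred from the three-dimensional condition via the single-mode functions $\vec{\breve{v}}^k$, $\breve{q}^k$) and concludes by invoking the abstract saddle-point theorem of \cite[Theorem 4.2.3]{MR3097958}. Your explicit bookkeeping of where each constant's $k$-independence comes from, including the seminorm/norm equivalence on $\boldsymbol{H}_{(k) \scriptstyle{\diamond}}^1(\Omega)$, is exactly what the paper's single-mode constructions and the isometry~\eqref{eq:H1vec_isometry} are designed to deliver.
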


\subsection{Axisymmetric data}
For axisymmetric data, i$.$e$.$, only $\vec{f}^0$ and $\vec{g}^0$ are non-zero, it follows immediately from
uniqueness that only $\vec{u}^0$ and $p^0$ are non-zero, which means that the solution will also be axisymmetric.
Also note that problem~\eqref{eq:stokes2dvariational} for $k=0$ decouples into two independent problems: one for
$(u_r^0, u_z^0, p^0)$ and one for $u_{\theta}^0$. If the data are real-valued, these problems have
real-valued solutions.

\section{Anisotropic spaces}
\label{sec:anisotropic_spaces}
We introduce two families $\boldsymbol{H}^{\pm 1,s}(\breve{\Omega})$ of anisotropic spaces,
where $s$ is a nonnegative real number measuring ``extra regularity'' in the angular direction,
that we (following~\cite{MR1693480}) will use to derive an estimate of the error
due to Fourier truncation in Section~\ref{sec:Fourier_truncation_error}.

When $s$ is a nonnegative integer, we define
\begin{align}
\label{eq:anisotropicscalarhpm1}
\boldsymbol{H}^{\pm 1,s}(\breve{\Omega}) & := \left\{\vec{\breve{v}} \vert \;
\mathcal{R}_{\theta} \partial^l_{\theta} \mathcal{R}_{-\theta} \vec{\breve{v}} \in \bigl( H^{\pm 1}(\breve{\Omega}) \bigr)^3, \; 0 \leq l \leq s \right\},
\end{align}
equipped with the norms
\begin{align}
\label{eq:anisotropic_hpm1_norm}
\| \vec{\breve{v}} \|_{\boldsymbol{H}^{\pm 1, s}(\breve{\Omega})} & :=
\Bigl( \sum_{k \in \mathbb{Z}} (1 + k^2)^s \| \vec{v}^k \|_{\boldsymbol{H}_{(k)}^{\pm 1}(\Omega)}^2 \Bigr)^{\frac{1}{2}}.
\end{align}
Note that $\boldsymbol{H}^{\pm 1,0}(\breve{\Omega}) = \bigl( H^{\pm 1}(\breve{\Omega}) \bigr)^3$.
The norms defined by~\eqref{eq:anisotropic_hpm1_norm}
are equivalent to the natural norms
\begin{align}
\label{eq:anisotropic_natural_hpm1_norm}
\Bigl(
  \sum_{l = 0}^s \| \mathcal{R}_{\theta} \partial^l_{\theta} \mathcal{R}_{-\theta} \vec{\breve{v}} \|_{( H^{\pm 1}(\breve{\Omega}))^3}^2 \Bigr)^{\frac{1}{2}},
\end{align}
as we show in the following
\begin{lemma}
\label{lemma:charactanisotropic2}
For any nonnegative integer $s$, the norms defined by~\eqref{eq:anisotropic_hpm1_norm}
are equivalent to the natural norms defined by~\eqref{eq:anisotropic_natural_hpm1_norm}.
\end{lemma}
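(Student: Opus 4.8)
The plan is to diagonalize both norms in the Fourier variable: after passing to Fourier coefficients, each norm becomes a weighted $l_2$-sum of the fixed quantities $\|\vec{v}^k\|_{\boldsymbol{H}_{(k)}^{\pm1}(\Omega)}^2$, and the two norms differ only in the scalar weight attached to each wavenumber $k$. The equivalence then reduces to an elementary comparison of the two weight sequences.

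First I would identify the action of the operator $\mathcal{R}_{\theta}\partial_{\theta}^l\mathcal{R}_{-\theta}$ at the level of Fourier coefficients. Since $\mathcal{R}_{-\theta}\vec{\breve{v}}=\vec{v}=\frac{1}{\sqrt{2\pi}}\sum_{k\in\mathbb{Z}}\vec{v}^k e^{ik\theta}$, differentiating term by term gives $\partial_{\theta}^l\mathcal{R}_{-\theta}\vec{\breve{v}}=\frac{1}{\sqrt{2\pi}}\sum_{k\in\mathbb{Z}}(ik)^l\vec{v}^k e^{ik\theta}$, so that $\mathcal{R}_{\theta}\partial_{\theta}^l\mathcal{R}_{-\theta}\vec{\breve{v}}$ is the vector field whose $k$-th (cylindrical) Fourier coefficient is $(ik)^l\vec{v}^k$. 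In the $H^{-1}$ case the same relation holds by duality: the distributional operator $\partial_{\theta}$ still multiplies the $e^{ik\theta}$-mode by $ik$, and the coefficients defined in~\eqref{eq:Hminus1fourier} transform accordingly.

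Next, applying the isometric isomorphism of Theorem~\ref{theorem:characth1} (respectively Theorem~\ref{theorem:characthm1}) to $\mathcal{R}_{\theta}\partial_{\theta}^l\mathcal{R}_{-\theta}\vec{\breve{v}}$ and using $|(ik)^l|^2=k^{2l}$, I obtain
\[
\|\mathcal{R}_{\theta}\partial_{\theta}^l\mathcal{R}_{-\theta}\vec{\breve{v}}\|_{(H^{\pm1}(\breve{\Omega}))^3}^2
=\sum_{k\in\mathbb{Z}}k^{2l}\,\|\vec{v}^k\|_{\boldsymbol{H}_{(k)}^{\pm1}(\Omega)}^2 .
\]
Summing over $0\le l\le s$ and interchanging the (nonnegative) summations, the square of the natural norm~\eqref{eq:anisotropic_natural_hpm1_norm} equals $\sum_{k}\bigl(\sum_{l=0}^s k^{2l}\bigr)\|\vec{v}^k\|_{\boldsymbol{H}_{(k)}^{\pm1}(\Omega)}^2$, while the square of the norm~\eqref{eq:anisotropic_hpm1_norm} is $\sum_{k}(1+k^2)^s\|\vec{v}^k\|_{\boldsymbol{H}_{(k)}^{\pm1}(\Omega)}^2$.

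It therefore remains to prove the two-sided bound $\sum_{l=0}^s k^{2l}\le(1+k^2)^s\le 2^s\sum_{l=0}^s k^{2l}$, with constants depending only on $s$. Setting $t=k^2\ge 0$ and expanding $(1+t)^s=\sum_{l=0}^s\binom{s}{l}t^l$ by the binomial theorem, I would compare coefficient by coefficient with $\sum_{l=0}^s t^l$: since $1\le\binom{s}{l}\le 2^s$ for $0\le l\le s$, the stated inequalities hold for every $t\ge 0$. Substituting $t=k^2$ and inserting this into the two weighted sums yields the equivalence of the norms, uniformly in $\vec{\breve{v}}$. The argument is essentially bookkeeping; the only point requiring genuine care is the first step, namely verifying that $\mathcal{R}_{\theta}\partial_{\theta}^l\mathcal{R}_{-\theta}$ multiplies the $k$-th Fourier coefficient by $(ik)^l$ and, in particular, that this operator is well defined on $\bigl(H^{-1}(\breve{\Omega})\bigr)^3$ so that~\eqref{eq:Hminus1vec_isometry} may be applied, which I expect to be the main (if mild) obstacle.
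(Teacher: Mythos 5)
Your proof is correct and follows essentially the same route as the paper: expand $\mathcal{R}_{\theta}\partial_{\theta}^l\mathcal{R}_{-\theta}\vec{\breve{v}}$ in Fourier modes, apply the isometries of Theorems~\ref{theorem:characth1} and~\ref{theorem:characthm1} to reduce both norms to weighted sums of $\|\vec{v}^k\|_{\boldsymbol{H}_{(k)}^{\pm 1}(\Omega)}^2$ with weights $\sum_{l=0}^s k^{2l}$ and $(1+k^2)^s$ respectively, and compare the weights. You in fact supply more detail than the paper at the final step (the two-sided bound $\sum_{l=0}^s k^{2l}\leq(1+k^2)^s\leq 2^s\sum_{l=0}^s k^{2l}$ via the binomial theorem, which the paper leaves implicit in ``from which the result follows''), and your remark that the mode-wise action of $\partial_{\theta}$ on $H^{-1}$ coefficients must be checked by duality is a fair, correctly resolved point of care.
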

\begin{proof}
Let $\vec{\breve{v}} \in \boldsymbol{H}^{\pm 1,s}(\breve{\Omega})$ and $0 \leq l \leq s$.
From the Fourier expansions
\begin{align*}
\mathcal{R}_{\theta} \partial^l_{\theta} \mathcal{R}_{-\theta} \vec{\breve{v}} (x, y, z) & =
  \frac{1}{\sqrt{2 \pi}} \, \sum_{k \in \mathbb{Z}} \mathcal{R}_{\theta} (ik)^l \vec{v}^k(r, z) e^{ik \theta},
\end{align*}
and Theorem~\ref{theorem:characth1}/\ref{theorem:characthm1}
(for $\boldsymbol{H}^{1, s}(\breve{\Omega})$ and $\boldsymbol{H}^{-1, s}(\breve{\Omega})$, respectively), we get
\begin{align*}
  \sum_{l = 0}^s \| \mathcal{R}_{\theta} \partial^l_{\theta} \mathcal{R}_{-\theta} \vec{\breve{v}} \|_{( H^{\pm 1}(\breve{\Omega}))^3}^2
  & = \sum_{l = 0}^s \sum_{k \in \mathbb{Z}}
       \| (ik)^l \vec{v}^k \|_{\boldsymbol{H}_{(k)}^{\pm 1}(\Omega)}^2 \\
  & = \sum_{k \in \mathbb{Z}} \Bigl( \sum_{l = 0}^s k^{2l} \Bigr) \| \vec{v}^k \|_{\boldsymbol{H}_{(k)}^{\pm 1}(\Omega)}^2,
\end{align*}
from which the result follows.
\end{proof}
We generalize by extending the norms defined by~\eqref{eq:anisotropic_hpm1_norm} to an arbitrary nonnegative real number $s$:
\begin{align}
\label{eq:anisotropic_hpm1_norm_s_arbitrary}
\boldsymbol{H}^{\pm 1,s}(\breve{\Omega}) & := \left\{\vec{\breve{v}} \vert \,
\| \vec{\breve{v}} \|_{\boldsymbol{H}^{\pm 1, s}(\breve{\Omega})} :=
\Bigl( \sum_{k \in \mathbb{Z}} (1 + k^2)^s \| \vec{v}^k \|_{\boldsymbol{H}_{(k)}^{\pm 1}(\Omega)}^2 \Bigr)^{\frac{1}{2}} < + \infty \right\}
\end{align}

\section{Error due to Fourier truncation}
\label{sec:Fourier_truncation_error}
Introducing the truncated Fourier series
\begin{align}
\label{eq:fourierutrunc}
\vec{\breve{u}}_{[N]} & = \frac{1}{\sqrt{2 \pi}} \, \sum_{|k| \leq N} \mathcal{R}_{\theta} \vec{u}^k(r, z) e^{ik \theta}, \\
\label{eq:fourierptrunc}
\breve{p}_{[N]} & = \frac{1}{\sqrt{2 \pi}} \, \sum_{|k| \leq N } p^k(r, z) e^{ik \theta},
\end{align}
we prove the following estimate of the error due to Fourier truncation:
\begin{theorem}
Let $s$ be a nonnegative real number. If the data $( \vec{\breve{f}}, \vec{\breve{g}} )$
of problem~\eqref{eq:stokesproblem} belong to $\boldsymbol{H}^{-1, s}(\breve{\Omega}) \times \boldsymbol{H}^{1,s}(\breve{\Omega})$,
the following error estimate holds between the solution $(\vec{\breve{u}}, \breve{p} )$
and its truncated Fourier series $(\vec{\breve{u}}_{[N]}, \breve{p}_{[N]})$:
\begin{align*}
\| \vec{\breve{u}} - \vec{\breve{u}}_{[N]} \|_{(H^1(\breve{\Omega}))^3} +
\|\breve{p} - \breve{p}_{[N]}\|_{L^2(\breve{\Omega})} \leq
C N^{-s} \bigl( \| \vec{\breve{f}}\|_{\boldsymbol{H}^{-1, s}(\breve{\Omega}) } +
\| \vec{\breve{g}}\|_{\boldsymbol{H}^{1, s}(\breve{\Omega}) } \bigr).
\end{align*}
\end{theorem}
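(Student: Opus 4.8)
The plan is to reduce the whole estimate to the per-mode stability bound~\eqref{eq:regularitystokes2D} via the two isometric characterizations already established. First I would use that the Fourier coefficients $(\vec{u}^k, p^k)$ of the three-dimensional solution $(\vec{\breve{u}}, \breve{p})$ of~\eqref{eq:stokesproblem} are precisely the solutions of the decoupled two-dimensional problems~\eqref{eq:stokes2dvariational}, so that the truncation errors defined by~\eqref{eq:fourierutrunc}--\eqref{eq:fourierptrunc} are exactly the high-frequency tails
\[
\vec{\breve{u}} - \vec{\breve{u}}_{[N]} = \frac{1}{\sqrt{2 \pi}} \sum_{|k| > N} \mathcal{R}_{\theta} \vec{u}^k(r,z) e^{ik\theta}, \qquad
\breve{p} - \breve{p}_{[N]} = \frac{1}{\sqrt{2 \pi}} \sum_{|k| > N} p^k(r,z) e^{ik\theta}.
\]
Applying the isometries of Theorem~\ref{theorem:characth1} and Theorem~\ref{theorem:charactl2}, restricted to the tail index set $\{|k|>N\}$, the squared three-dimensional norms of these errors become
\[
\| \vec{\breve{u}} - \vec{\breve{u}}_{[N]} \|_{(H^1(\breve{\Omega}))^3}^2 = \sum_{|k|>N} \| \vec{u}^k \|_{\boldsymbol{H}_{(k)}^1(\Omega)}^2, \qquad
\| \breve{p} - \breve{p}_{[N]} \|_{L^2(\breve{\Omega})}^2 = \sum_{|k|>N} \| p^k \|_{L^2_1(\Omega)}^2.
\]

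Next I would insert the $k$-uniform stability estimate~\eqref{eq:regularitystokes2D}, squaring it and using $(a+b)^2 \leq 2(a^2+b^2)$, to bound each summand by $2C^2 \bigl( \| \vec{f}^k \|_{\boldsymbol{H}_{(k)}^{-1}(\Omega)}^2 + \| \vec{g}^k \|_{\boldsymbol{H}_{(k)}^1(\Omega)}^2 \bigr)$. The crux of the argument is then the elementary weight inequality
\[
(1+k^2)^s \geq |k|^{2s} > N^{2s} \quad \text{for } |k| > N,
\]
which gives $1 < N^{-2s}(1+k^2)^s$ on the tail. Multiplying each data term by this factor and summing, the tail sums are dominated by the full anisotropic norms defined in~\eqref{eq:anisotropic_hpm1_norm_s_arbitrary}, e.g.
\[
\sum_{|k|>N} \| \vec{f}^k \|_{\boldsymbol{H}_{(k)}^{-1}(\Omega)}^2 \leq N^{-2s} \sum_{|k|>N} (1+k^2)^s \| \vec{f}^k \|_{\boldsymbol{H}_{(k)}^{-1}(\Omega)}^2 \leq N^{-2s} \| \vec{\breve{f}} \|_{\boldsymbol{H}^{-1,s}(\breve{\Omega})}^2,
\]
and analogously for $\vec{\breve{g}}$ in the $\boldsymbol{H}^{1,s}(\breve{\Omega})$-norm.

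Combining these estimates and taking square roots, I would pass from the squared sum back to the sum of the two error norms using $a+b \leq \sqrt{2}\sqrt{a^2+b^2}$ on the left and $\sqrt{A+B} \leq \sqrt{A} + \sqrt{B}$ on the right, arriving at the bound $2C N^{-s} \bigl( \| \vec{\breve{f}} \|_{\boldsymbol{H}^{-1,s}(\breve{\Omega})} + \| \vec{\breve{g}} \|_{\boldsymbol{H}^{1,s}(\breve{\Omega})} \bigr)$, which is the claim after relabelling the constant. I do not anticipate a serious obstacle here: essentially all of the difficulty has been front-loaded into the isometric characterizations and the well-posedness theorem, so the only points requiring genuine care are verifying that the stability constant in~\eqref{eq:regularitystokes2D} is truly independent of $k$ (as the well-posedness theorem asserts) and keeping the bookkeeping of the constants honest through the squaring and square-root manipulations.
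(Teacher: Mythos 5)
Your proposal is correct and takes essentially the same route as the paper's own proof: reduce to the tail via the isometries~\eqref{eq:l2_isometry} and~\eqref{eq:H1vec_isometry}, apply the $k$-uniform stability bound~\eqref{eq:regularitystokes2D} mode by mode, and insert the weight factor $N^{-2s}(1+k^2)^s \geq 1$ for $|k| > N$ (stated in the non-strict form so that the case $s=0$ is also covered, where your strict inequality $|k|^{2s} > N^{2s}$ degenerates to equality). The remaining differences are only cosmetic bookkeeping of constants in the final squaring and square-root steps.
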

\begin{proof}
Using the isometries~\eqref{eq:l2_isometry} and~\eqref{eq:H1vec_isometry},
together with the regularity estimate~\eqref{eq:regularitystokes2D}
and the definitions of the anisotropic norms~\eqref{eq:anisotropic_hpm1_norm_s_arbitrary},
we have:
\begin{align*}
\| \vec{\breve{u}} - \vec{\breve{u}}_{[N]} \|&_{(H^1(\breve{\Omega}))^3}^2 +
\|\breve{p} - \breve{p}_{[N]}\|_{L^2(\breve{\Omega})}^2 \\
& = \Big\| \frac{1}{\sqrt{2 \pi}} \, \sum_{|k| > N} \mathcal{R}_{\theta} \vec{u}^k e^{ik \theta} \Big\|_{(H^1(\breve{\Omega}))^3}^2 +
\Big\| \frac{1}{\sqrt{2 \pi}} \, \sum_{|k| > N } p^k e^{ik \theta} \Big\|_{L^2(\breve{\Omega})}^2 \\
& = \sum_{|k| > N} \bigl( \| \vec{u}^k \|_{\boldsymbol{H}_{(k)}^{1}(\Omega)}^2 + \|p^k\|^2_{L^2_1(\Omega)} \bigr) \\
& \leq 2C^2 \sum_{|k| > N} \bigl( \| \vec{f}^k \|_{\boldsymbol{H}_{(k)}^{-1}(\Omega)}^2 + \| \vec{g}^k \|_{\boldsymbol{H}_{(k)}^{1}(\Omega)}^2 \bigr)  \\
& \leq 2C^2 N^{-2s} \sum_{|k| > N} (1 + k^2)^s \bigl( \| \vec{f}^k \|_{\boldsymbol{H}_{(k)}^{-1}(\Omega)}^2 + \| \vec{g}^k \|_{\boldsymbol{H}_{(k)}^{1}(\Omega)}^2 \bigr)  \\
& \leq 2 C^2 N^{-2s} \bigl( \| \vec{\breve{f}} \|_{\boldsymbol{H}^{-1, s}(\breve{\Omega})}^2 + \| \vec{\breve{g}} \|_{\boldsymbol{H}^{1, s}(\breve{\Omega})}^2 \bigr).
\end{align*}
\end{proof}
\begin{remark}
As emphasized in~\cite{MR2210089}, the error from Fourier truncation only
depends on the regularity of the data $( \vec{\breve{f}}, \vec{\breve{g}} )$;
not on the regularity of the solution
$(\vec{\breve{u}}, \breve{p} )$ (which is geometry dependent). This means that
for regular (with respect to $\theta$) data, a small value for $N$ suffices.
\end{remark}

\appendix
\section{Comparison with Costabel et al.}
\label{app:sec:comparison}
In~\cite{2020arXiv200407216C}, characterization of $H^m(\breve{\Omega})$ by Fourier coefficients
(for any positive integer $m$) is treated first, then the relation~\cite[Proposition 6.1]{2020arXiv200407216C}
\begin{equation}
\label{eq:relation_vector_scalar_norm}
\| \vec{v}^k \|^2_{\boldsymbol{H}_{(k)}^1(\Omega)} =
\frac{1}{2} \, \| v_r^k + i v_{\theta}^k \|^2_{H_{(k+1)}^1(\Omega)} +
\frac{1}{2} \, \| v_r^k - i v_{\theta}^k \|^2_{H_{(k-1)}^1(\Omega)} +
\| v_z^k \|^2_{H_{(k)}^1(\Omega)},
\end{equation}
linking vector $\boldsymbol{H}_{(k)}^1(\Omega)$-norms and scalar $H_{(k)}^1(\Omega)$-norms, is used
to derive characterizations of $\bigl(H^m(\breve{\Omega})\bigr)^3$.

In the present work, we have derived the characterization of $\bigl(H^1(\breve{\Omega})\bigr)^3$
in Theorem~\ref{theorem:characth1} by directly rewriting the $\bigl(H^1(\breve{\Omega})\bigr)^3$-norm,
where the decomposition~\eqref{eq:H1vecnormcylcoord2Dsum} resulted in the
$\boldsymbol{H}_{(k)}^1(\Omega)$-norms
given by~\eqref{eq:H1vec0_norm} for $k=0$,
by~\eqref{eq:H1vec1_norm} for $|k|=1$,
and by~\eqref{eq:H1vec2_norm} for $|k|\geq 2$.

To compare the methods, we first note that
\begin{equation}
\label{eq:scalar_H1k_norm}
\| v \|_{H_{(k)}^1(\Omega)}^2 := \left\{
\begin{array}{lcl}
  \| v \|_{L^2_1(\Omega)}^2 + | v |_{H^1_1(\Omega)}^2, & \text{if} & k=0, \\ [2ex]
  \| v \|_{L^2_1(\Omega)}^2 + | v |_{H^1_1(\Omega)}^2 + k^2 \, \| v \|_{L^2_{-1}(\Omega)}^2, & \text{if} & |k| \geq 1,
\end{array}
\right.
\end{equation}
which follows, as in Section~\ref{subsec:Fourier_decomposition_of_norms},
by expressing the $H^1(\breve{\Omega})$-inner product in
cylindrical coordinates and using Fourier expansions:
\begin{align*}
\begin{alignedat}{1}
( \breve{u}, \breve{v} )_{H^1(\breve{\Omega})}
& = \int_{\breve{\Omega}}
  \left(
    \breve{u} \bar{\breve{v}}
    + \partial_x \breve{u} \partial_x \bar{\breve{v}}
    + \partial_y \breve{u} \partial_y \bar{\breve{v}}
    + \partial_z \breve{u} \partial_z \bar{\breve{v}}
  \right) \, \text{d}x \, \text{d}y \, \text{d}z \\
& = \int_{\tilde{\Omega}}
  \Bigl(
    u \bar{v}
     + \partial_r u \partial_r \bar{v}
     + \frac{1}{r^2} \, \partial_{\theta} u \partial_{\theta} \bar{v}
     + \partial_z u \partial_z \bar{v}
  \Bigr) \, r \, \text{d}r \, \text{d}\theta \, \text{d}z \\
  & = \sum_{k \in \mathbb{Z}} \int_{\Omega}
\Bigl(
  u^k \bar{v}^k
  + \partial_r u^k \partial_r \bar{v}^k
  + \partial_z u^k \partial_z \bar{v}^k + \frac{k^2}{r^2} \, u^k \bar{v}^k
\Bigr) \, r \, \text{d}r \, \text{d}z \\
 & =: \sum_{k \in \mathbb{Z}} (u^k, v^k)_{H_{(k)}^1(\Omega)}.
\end{alignedat}
\end{align*}

From~\eqref{eq:relation_vector_scalar_norm},~\eqref{eq:scalar_H1k_norm}, and the polarization identity
\begin{align*}
\| a + b \|^2 + \| a - b \|^2 = 2 \left( \| a \|^2 + \| b \|^2 \right),
\end{align*}
we then get for $k=0$
\begin{align*}
\| \vec{v}^0 \|^2_{\boldsymbol{H}_{(0)}^1(\Omega)} & =
\frac{1}{2} \, \| v_r^0 + i v_{\theta}^0 \|^2_{H_{(1)}^1(\Omega)} +
\frac{1}{2} \, \| v_r^0 - i v_{\theta}^0 \|^2_{H_{(-1)}^1(\Omega)} +
\| v_z^0 \|^2_{H_{(0)}^1(\Omega)} \\
& = \frac{1}{2} \left( \| v_r^0 + i v_{\theta}^0 \|_{L^2_1(\Omega)}^2
    + | v_r^0 + i v_{\theta}^0 |_{H^1_1(\Omega)}^2 + \| v_r^0
    + i v_{\theta}^0 \|_{L^2_{-1}(\Omega)}^2 \right. \\
& \qquad + \left. \| v_r^0 - i v_{\theta}^0 \|_{L^2_1(\Omega)}^2
    + | v_r^0 - i v_{\theta}^0 |_{H^1_1(\Omega)}^2
    + \| v_r^0 - i v_{\theta}^0 \|_{L^2_{-1}(\Omega)}^2 \right) \\
& \qquad + \| v_z^0 \|_{L^2_1(\Omega)}^2 + | v_z^0 |_{H^1_1(\Omega)}^2 \\
& = \| v_r^0 \|_{L^2_1(\Omega)}^2 + \| v_{\theta}^0 \|_{L^2_1(\Omega)}^2 + \| v_z^0 \|_{L^2_1(\Omega)}^2
    + | v_r^0 |_{H^1_1(\Omega)}^2 + | v_{\theta}^0 |_{H^1_1(\Omega)}^2 \\
& \qquad + \| v_r^0 \|_{L^2_{-1}(\Omega)}^2 + \| v_{\theta}^0 \|_{L^2_{-1}(\Omega)}^2
         + | v_z^0 |_{H^1_1(\Omega)}^2 \\
& = \| \vec{v}^0 \|_{(L^2_1(\Omega))^3}^2
    + \| v_r^0 \|_{V^1_1(\Omega)}^2 + \| v_{\theta}^0 \|_{V^1_1(\Omega)}^2 + | v_z^0 |_{H^1_1(\Omega)}^2,
\end{align*}
which is the same as~\eqref{eq:H1vec0_norm}.

Similarly, for $|k|=1$, we get
\begin{align*}
\| \vec{v}^{\pm 1} \|^2_{\boldsymbol{H}_{(\pm 1)}^1(\Omega)} & =
\frac{1}{2} \, \| v_r^{\pm 1} \pm i v_{\theta}^{\pm 1} \|^2_{H_{(\pm 2)}^1(\Omega)} +
\frac{1}{2} \, \| v_r^{\pm 1} \mp i v_{\theta}^{\pm 1} \|^2_{H_{(0)}^1(\Omega)} +
\| v_z^{\pm 1} \|^2_{H_{(\pm 1)}^1(\Omega)} \\
& = \frac{1}{2} \left( \| v_r^{\pm 1} \pm i v_{\theta}^{\pm 1} \|_{L^2_1(\Omega)}^2
                     + \| v_r^{\pm 1} \mp i v_{\theta}^{\pm 1} \|_{L^2_1(\Omega)}^2 \right. \\
& \qquad + \left. | v_r^{\pm 1} \pm i v_{\theta}^{\pm 1} |_{H^1_1(\Omega)}^2
                + | v_r^{\pm 1} \mp i v_{\theta}^{\pm 1} |_{H^1_1(\Omega)}^2 \right) \\
& \qquad + 2 \, \| v_r^{\pm 1} \pm i v_{\theta}^{\pm 1} \|_{L^2_{-1}(\Omega)}^2 \\
& \qquad + \| v_z^{\pm 1} \|_{L^2_1(\Omega)}^2
         + | v_z^{\pm 1} |_{H^1_1(\Omega)}^2
         + \| v_z^{\pm 1} \|_{L^2_{-1}(\Omega)}^2 \\
& = \| v_r^{\pm 1} \|_{L^2_1(\Omega)}^2
         + \| v_{\theta}^{\pm 1} \|_{L^2_1(\Omega)}^2
         + | v_r^{\pm 1} |_{H^1_1(\Omega)}^2
         + | v_{\theta}^{\pm 1} |_{H^1_1(\Omega)}^2 \\
& \qquad + 2 \, \| v_r^{\pm 1} \pm i v_{\theta}^{\pm 1} \|_{L^2_{-1}(\Omega)}^2
         + \| v_z^{\pm 1} \|_{L^2_1(\Omega)}^2
         + \| v_z^{\pm 1} \|_{V^1_{1}(\Omega)}^2 \\
& = \| \vec{v}^{\pm 1} \|_{(L^2_1(\Omega))^3}^2
         + 2 \, \| v_r^{\pm 1} \pm i v_{\theta}^{\pm 1} \|_{L^2_{-1}(\Omega)}^2 \\
& \qquad + | v_r^{\pm 1} |_{H^1_1(\Omega)}^2
         + | v_{\theta}^{\pm 1} |_{H^1_1(\Omega)}^2
         + \| v_z^{\pm 1} \|_{V^1_{1}(\Omega)}^2,
\end{align*}
which is the same as~\eqref{eq:H1vec1_norm}.

Finally, for $|k| \geq 2$ (noting that $|k \pm 1| \geq 1$), we get
\begin{align*}
\| \vec{v}^k \|^2_{\boldsymbol{H}_{(k)}^1(\Omega)} & =
\frac{1}{2} \, \| v_r^k + i v_{\theta}^k \|^2_{H_{(k+1)}^1(\Omega)} +
\frac{1}{2} \, \| v_r^k - i v_{\theta}^k \|^2_{H_{(k-1)}^1(\Omega)} +
\| v_z^k \|^2_{H_{(k)}^1(\Omega)} \\
& = \frac{1}{2} \left( \| v_r^k + i v_{\theta}^k \|_{L^2_1(\Omega)}^2
                     + \| v_r^k - i v_{\theta}^k \|_{L^2_1(\Omega)}^2 \right. \\
& \qquad +        | v_r^k + i v_{\theta}^k |_{H^1_1(\Omega)}^2
                + | v_r^k - i v_{\theta}^k |_{H^1_1(\Omega)}^2 \\
& \qquad + \left. (k+1)^2 \, \| v_r^k + i v_{\theta}^k \|_{L^2_{-1}(\Omega)}^2
         +        (k-1)^2 \, \| v_r^k - i v_{\theta}^k \|_{L^2_{-1}(\Omega)}^2 \right) \\
& \qquad + \| v_z^k \|_{L^2_1(\Omega)}^2
         + | v_z^k |_{H^1_1(\Omega)}^2
         + k^2 \, \| v_z^k \|_{L^2_{-1}(\Omega)}^2 \\
& = \| v_r^k \|_{L^2_1(\Omega)}^2 + \| v_{\theta}^k \|_{L^2_1(\Omega)}^2 + \| v_z^k \|_{L^2_1(\Omega)}^2 \\
& \qquad + | v_r^k |_{H^1_1(\Omega)}^2 + | v_{\theta}^k |_{H^1_1(\Omega)}^2 + | v_z^k |_{H^1_1(\Omega)}^2 \\
& \qquad + (1 + k^2) \left( \| v_r^k \|_{L^2_{-1}(\Omega)}^2 + \| v_{\theta}^k \|_{L^2_{-1}(\Omega)}^2 \right)
         + k^2 \, \| v_z^k \|_{L^2_{-1}(\Omega)}^2 \\
& \qquad + k \left( \| v_r^k + i v_{\theta}^k \|_{L^2_{-1}(\Omega)}^2 - \| v_r^k - i v_{\theta}^k \|_{L^2_{-1}(\Omega)}^2 \right) \\
& = \| \vec{v}^k \|_{(L^2_1(\Omega))^3}^2 + | \vec{v}^k |_{(H^1_1(\Omega))^3}^2 \\
& \qquad + (1 + k^2) \left( \| v_r^k \|_{L^2_{-1}(\Omega)}^2 + \| v_{\theta}^k \|_{L^2_{-1}(\Omega)}^2 \right)
         + k^2 \, \| v_z^k \|_{L^2_{-1}(\Omega)}^2 \\
& \qquad + 2ik \int_{\Omega} \bigl( v_{\theta}^k \bar{v}_r^k - v_r^k \bar{v}_{\theta}^k \bigr) \, \frac{1}{r} \, \text{d}r \, \text{d}z,
\end{align*}
which is the same as~\eqref{eq:H1vec2_norm}.

In conclusion, the $\boldsymbol{H}_{(k)}^1(\Omega)$-norms
for the Fourier coefficient spaces that we have derived
by directly rewriting the $\bigl(H^1(\breve{\Omega})\bigr)^3$-norm, are the
same as those obtained from the relation~\eqref{eq:relation_vector_scalar_norm}.
In addition to conveying structural understanding,
relation~\eqref{eq:relation_vector_scalar_norm} also facilitates the general
study, by induction, for positive integer order Sobolev spaces carried out in~\cite{2020arXiv200407216C}.
For our exposition, directly aimed at the Stokes problem, the method we have chosen
is straightforward to follow. The same remark can be made as to the difference
between introducing the differential operators
$\partial_{\zeta} = \frac{1}{\sqrt{2}} \, ( \partial_x - i \partial_y )$ and
$\partial_{\bar{\zeta}} = \frac{1}{\sqrt{2}} \, ( \partial_x + i \partial_y )$
in~\cite{2020arXiv200407216C}, and our choice of working directly
with $\partial_x$ and $\partial_y$.

\bibliography{niklas_referenser}
\bibliographystyle{plain}

\end{document}